\providecommand{\customgenericname}{}
\newcommand{\newcustomtheorem}[2]{%
  \newenvironment{#1}[1]
  {%
   \renewcommand\customgenericname{#2}%
   \renewcommand\theinnercustomgeneric{##1}%
   \innercustomgeneric
  }
  {\endinnercustomgeneric}
}
\def\NZQ{\mathbb}               
\def\QQ{{\NZQ Q}}
\def\Z{{\NZQ Z}}
\def\m{\mathfrak{m}}
\def\p{\mathfrak{p}}
\def\q{\mathfrak{q}}
\DeclareMathOperator*{\Hom}{Hom}
\DeclareMathOperator*{\Ho}{H}
\DeclareMathOperator*{\Tor}{Tor}
\DeclareMathOperator*{\Zi}{Z}
\DeclareMathOperator*{\Bo}{B}
\DeclareMathOperator*{\coker}{coker}
\DeclareMathOperator*{\ann}{ann}
\DeclareMathOperator*{\soc}{soc}
\DeclareMathOperator*{\lo}{ll}
\DeclareMathOperator*{\edim}{edim}
\DeclareMathOperator*{\mx}{max}
\DeclareMathOperator*{\I}{I}
\DeclareMathOperator*{\init}{in}
\DeclareMathOperator*{\E}{E}
\newtheorem{theorem}{Theorem}[section]
\newtheorem{lemma}[theorem]{Lemma}
\newtheorem{proposition}[theorem]{Proposition}
\newtheorem{example}[theorem]{Example}
\newtheorem{definition}[theorem]{Definition}
\newtheorem*{definition*}{Definition}
\newtheorem*{acknowledgement}{Acknowledgement}
\newtheorem{theoremx}{\bf Theorem}
\title{A Study of Good and Bad Artinian Gorenstein local Rings}
\date{\today, \currenttime}
\author{Anjan Gupta, Shrikant Shekhar}
\address{Department of Mathematics\\
	Indian Institute of Science Education and Research Bhopal\\
	Bhopal Bypass Road, Bhopal, Madhya Pradesh, India. Pin - 462066.}
\email{anjan@iiserb.ac.in, shrikantshekhar21@gmail.com, }
\email{}
\thanks{Corresponding author: Anjan Gupta; 
	{\it email: anjan@iiserb.ac.in}}
\begin{document}
\begin{abstract}
We say that a local ring $R$ is good, in the sense of Roos, if all finitely generated $R$-modules have rational Poincar\'e series that share a common denominator; otherwise, $R$ is said to be bad. An important class of good rings is the class of generalized Golod rings. In this paper, we show that connected sums of Artinian Gorenstein generalized Golod rings are good. We provide a criterion for decomposing Artinian Gorenstein local rings as connected sums. As a key application, we prove that a Gorenstein local ring $R$ with maximal ideal $\m$ is good under either of the following conditions: 
\begin{enumerate}
\item the multiplicity of $R$ is at most $12$ and its $h$-vector is different from $(1, 5, 5, 1)$,
\item
$\m^4$ = 0 and $\m^2$ is generated by at most four elements. 
\end{enumerate}
The above result records partial progress towards resolving a question posed by L.~Avramov.
We also present examples of bad Artinian Gorenstein local rings of any multiplicity greater than or equal to $18$. In all these cases, the results establishing that the rings are good are obtained by showing that the rings are generalized Golod rings.
\end{abstract}

\maketitle 

\noindent {\it Mathematics Subject Classification: 13D02, 13D40, 13H10, 16S30.}

\noindent {\it Key words: Poincar\'e series, generalized Golod rings, DG algebras, fiber products, connected sums.}

\section{introduction}
Let $R$ be a commutative Noetherian local ring with maximal ideal $\m$ and residue field $k = R/\m$. 
The Poincaré series of an $R$-module $M$ is defined as
\[
P_M^R(t) = \sum_{i = 0}^{\infty} \beta_i^R(M)\, t^i \in \mathbb{Z}[[t]],
\]
where $\beta_i^R(M) = \dim_k \Tor_i^R(M, k)$ denotes the $i$-th Betti number of $M$ over $R$. A central question in the homological study of local rings concerns the structure of the Poincaré series $P_k^R(t)$ of the residue field $k$. 
In the 1950s, Serre and Kaplansky independently asked whether the power series $P_k^R(t)$ is always a rational function; equivalently,  whether there exists a polynomial $g(t) \in \Z[t]$ such that $g(t) P_k^R(t) \in \Z[t]$. 
A strikingly similar question in algebraic topology, also attributed to Serre, concerns whether the Poincaré series associated with the loop space of a finite, simply connected CW complex is rational \cite{MR201468}. The resemblance between these two questions revealed a deep and surprising connection between algebra and topology \cite{roos1979relations}. It was widely conjectured that both questions might have affirmative answers, at least under suitable hypotheses.

For several years, no counterexamples were known, and the rationality of  Poincaré series $P^R_M(t)$ was proved in  certain special cases, including when $R$ is a regular local ring or a complete intersection. These positive results strengthened the belief that rationality might be a general phenomenon. However, in a breakthrough result, Anick~\cite{anick1982counterexample} constructed a local ring $R$ with residue field $k$ such that the Poincaré series  $P_k^R(t)$ is not rational.
Later, B{\o}gvad~\cite{bogvad1983gorenstein} refined Anick’s construction and produced an example of an Artinian Gorenstein local ring $R$ whose residue field $k$ has a non-rational Poincaré series $P_k^R(t)$. This was particularly striking because Artinian Gorenstein rings are known for their well-behaved properties. The existence of non-rational Poincaré series even in this class revealed that the rationality question is sensitive to more than just the overall algebraic structure of the ring—it depends on deeper, more intricate homological features.

These counterexamples not only resolved the original question in the negative but also opened a rich line of inquiry into the nature of Poincaré series for various classes of local rings. The rationality of the Poincar\'e series $P_k^R(t)$ implies that the Betti sequence $\{\beta_i^R(k)\}$ eventually satisfies a linear recurrence relation, offering significant information about the structure of the minimal free resolution of $k$ over $R$. A detailed account of applications arising from the rationality of Poincaré series is provided in \cite{avramov1994local}. Consequently, understanding when the Poincar\'e series is rational, and which structural properties of the ring influence this remains an important and active area of research in commutative algebra.

According to Roos, a local ring $R$ is said to be {good} if there exists a polynomial $d_R(t) \in \mathbb{Z}[t]$ such that, for every finitely generated $R$-module $M$, the product $d_R(t) P^R_M(t)$ lies in the polynomial ring $\mathbb{Z}[t]$~\cite[Definition~2.1]{MR2158761}. This definition implies that the Poincar\'e series of all finitely generated $R$-modules share a common polynomial denominator.
Roos constructed examples of standard graded Koszul algebras which fail to be good, thereby illustrating that even in seemingly structured algebraic settings, the behavior of Poincaré series can be unexpected ~\cite[Theorem~2.4]{MR2158761}. These examples are often referred to as {bad} rings, in contrast with good ones.

Despite the existence of such counterexamples, many well-known classes of local rings such as Golod rings \cite[Theorem~1]{ghione1975some} and local complete intersections \cite[Corollary~4.2]{gulliksen1974change} fall within the class of good rings.
Furthermore, certain homological bounds guarantee goodness: any local ring $R$ with codepth at most three is known to be good, and the same holds for Gorenstein local rings when the codepth is at most four~\cite[Theorem~6.4]{avramov1988poincare}; see also~\cite{avramov1998infinite} for further related results and developments. These findings highlight that while being a good ring is not universal, the property nevertheless holds
across a wide range of meaningful and well-studied classes of local rings.

The connected sum $R \# S$ of two Gorenstein local rings $R$ and $S$, sharing a common residue field $k$, was introduced and studied in detail by Ananthnarayan et al.
in~\cite{ananthnarayan2012connected}. The connected sum $R \# S$ is also a Gorenstein local ring with residue field $k$, and its behavior with respect to homological invariants has proven to be remarkably rich. They provided an explicit formula for the Poincar\'e series of the residue field $k$ over the connected sum $R \# S$, expressed in terms of the corresponding series over the original rings $R$ and $S$. The formula shows that if both $P^R_k(t)$ and $ P^S_k(t)$ are rational functions, then so is $P^{R \# S}_k(t)$. In other words, the rationality of the Poincaré series of the residue field is preserved under the connected sum operation, provided it holds for the components.

This naturally leads to a deeper and more subtle question: \emph{When are connected sums of good Gorenstein local rings also good?} That is, if $R$ and $S$ are good Gorenstein local rings, is the connected   sum $R \# S$ necessarily good as well? Avramov~\cite{avramov1994local} introduced a new class of local rings called generalized Golod rings. He proved that such rings are good. To date, no example of a good ring that is not generalized Golod has been found.

Motivated by our question and Avramov's result, we studied connected sums and their relation to generalized Golod rings. We prove  the following result.

\begin{theoremx}{\rm(Theorem \ref{ggc5})}\label{intro4}
	Let $R$ and $S$ be Artinian Gorenstein local rings with a common residue field $k$, and let $T = R \# S$ denote their connected sum. Fix an integer $l \geq 2$. Then $T$ is a generalized Golod ring of level $l$ if and only if both $R$ and $S$ are generalized Golod rings of level $l$.
\end{theoremx}

Another natural question is: When does an Artinian Gorenstein local ring admit a decomposition as a connected sum? This question was previously studied in \cite{ananthnarayan2019decomposing}. Recall that the Loewy length $\lo(R)$ of an Artinian local ring $R$ with maximal ideal $\m$ is defined as the largest integer $n$ for which $\m^n \neq 0$.  
 
We establish the following result. 

\begin{theoremx}{\rm(Theorem \ref{ac3})}\label{conde}
	Let $R$ be an Artinian Gorenstein local ring with maximal ideal $\m$ and with $\lo(R) \geq 3$. Then the following are equivalent:
	
	\begin{enumerate}
		\item
		There are Artinian Gorenstein local rings $S$, $T$ with $\lo(S) = \lo(R)$, $\lo(T) = 2$ such that $R = S \# T$.
		
		\item
		$(0 : \m^2) \not \subseteq \m^2$.
	\end{enumerate}
\end{theoremx}

Our work uncovers new families of Gorenstein local rings that are good, as detailed in the following theorem.

\begin{theoremx}{\rm(Theorem \ref{ggc6})}\label{intro5}
	Let $R$ be a Gorenstein local ring. Then $R$  is a generalized Golod ring, and in particular, good in the sense of Roos, in the following cases:
	\begin{enumerate}
		\item
		The square $\m^2$ of the maximal ideal $\m$ of $R$ is minimally generated by at most four elements and $\m^4 = 0$.
		\item
         The multiplicity of $R$ is at most $12$ and its $h$-vector is different from $(1, 5, 5, 1)$.
	\end{enumerate}
\end{theoremx}

Avramov \cite[Problem 1]{MR1165314} asked for conditions on a ring $R$ with multiplicity at most $12$ under which the Poincaré series $P_M^R(t)$ is rational for every $R$-module $M$. The preceding theorem offers partial progress toward this problem. It is worth noting that the same result implies that any Gorenstein local ring of multiplicity at most $11$ is good. 

A key step in proving Theorem \ref{intro5} is to decompose the ring in question as a nontrivial connected sum of Artinian Gorenstein local rings, invoking Theorem \ref{conde}. To illustrate the limitation of this approach, we exhibit a Gorenstein local ring of multiplicity $12$ with $h$-vector $(1,5,5,1)$ that cannot be expressed as a nontrivial connected sum of Artinian Gorenstein local rings (see Example \ref{noce}). This example demonstrates that our method cannot be extended to establish Theorem \ref{intro5}(2) in greater generality. Notably, Artinian Gorenstein local rings of small multiplicity need not be generalized Golod. In this regard, we prove the following result.

\begin{theoremx}{\rm(Theorem \ref{nonGGRAG})}\label{intro6}
	For every integer $n \geq 8$, there exists an Artinian Gorenstein local ring $S_n$ with $h$-vector $(1, n, n, 1)$ that is not generalized Golod. In particular, for every $m \geq 18$, there exists an Artinian Gorenstein local ring of multiplicity $m$ that fails to be generalized Golod.
\end{theoremx}

\noindent
{\bf Outline of the Paper}:
In Section~\ref{sec:pre}, we recall background material needed for the subsequent sections. Readers familiar with \cite{avramov1998infinite} and \cite{gulliksen1969homology} may skip this part. The main technical developments are presented in Sections \ref{sec:ben1} and \ref{sec:ben2}, where we examine a simple but central idea in depth. Consider a local ring $R$ and its quotient ring $S = R/I$ by an ideal $I$. In general, there is no established method to relate the acyclic closures of $R$ and $S$. 
We address this gap in a specific case, namely when $R$ is an Artinian Gorenstein local ring with embedding dimension at least two, and $I$ is  the socle of $R$. We prove that the acyclic closure of $S$ is a semifree DG algebra extension of the tensor product $ S \otimes_R R\langle X \rangle$, where $R\langle X \rangle$ denotes the acyclic closure of $R$ (see Proposition~\ref{ggc3}). An analogous construction for fiber products is developed in Proposition~\ref{ggfl3}, following a similar line of reasoning. The arguments presented in Section~\ref{sec:ben1} are based on a homological characterization of Golod algebras (see Theorem~\ref{prl7}) and make use of chain $\Gamma$-derivations defined on acyclic closures, a technique that can be traced back to Gulliksen’s foundational work (see Lemma~\ref{ggc2}).

Section~\ref{sec:ben2} contains the proof of Theorem \ref{intro4}. In Section~\ref{sec:ben3}, we study applications and implications of the preceding results, proving Theorems \ref{conde} and \ref{intro5}. We also establish Theorem \ref{intro6}, where we construct explicit examples of Artinian Gorenstein local rings, obtained via trivial extensions of the injective hull of the residue field, that fail to be good.

All rings in this article are Noetherian local rings with $1\neq0$. All modules are nonzero and finitely generated.
Throughout this article, the expression ``local ring $(R,\m, k)$" refers to a commutative Noetherian local ring $R$ with maximal ideal $\m$ and residue field $k = R/\m$. When information on the residue field is not necessary, we denote a local ring $R$ with maximal ideal $\m$ simply by $(R, \m)$.

\section{preliminaries}\label{sec:pre}
We begin with the necessary notation and background, laying the foundation for subsequent sections. For all other unexplained notations and terminology, we refer the reader to  \cite{avramov1998infinite}, \cite{bruns1998cohen}  and \cite{gulliksen1969homology}.

\begin{subsection}{DG algebras, DG$\Gamma$ algebras}\label{prl1}
Let $(R, \m, k)$ be a local ring. A DG algebra $(A, \partial)$ over the ring $R$ consists of a non-negatively graded, strictly skew-commutative $R$-algebra $A = \oplus_{i \geq 0} A_i$ such that $ A_0 = R/I$ for some ideal $I$ of $R$, together with an $R$-linear differential map $\partial : A \rightarrow A$ of degree $-1$ satisfying $\partial^2=0$ and the Leibniz rule (see \cite[Chapter 1, \S 1]{gulliksen1969homology}). A DG algebra $(A, \partial)$ is called augmented if it is equipped with a surjective $R$-algebra homomorphism $\epsilon : A \twoheadrightarrow R/J$ for some ideal $J$ of $R$ such that $\left. \epsilon \right |_{A_{\geq 1}} =  0$ and $\epsilon \circ \partial = 0$. The map $\epsilon$ is called an  augmentation map. 
The sets of cycles and boundaries of $A$ are denoted by $\Zi(A)$ and $\Bo(A)$ respectively.
We set $\I(A) = \ker {\epsilon}$,  called the augmentation ideal and $\I \Zi(A) = \I A \cap \Zi(A)$. Let $\Ho(A) = \Zi(A)/ \Bo(A)$. If $\tilde{\epsilon} : \Ho(A) \rightarrow R/ J$ is the induced map, then we define $\I\Ho(A) = \ker \tilde{\epsilon}$.

A DG algebra is called a DG$\Gamma$ algebra if to every element $x$ of even positive degree there is associated a sequence of elements $x^{(k)}$, $k \geq 0$ called the divided powers of $x$ satisfying the usual axioms (see \cite[Chapter \S 7]{gulliksen1969homology}) and compatible with the differential, i.e., $\partial(x^{(n)}) = \partial(x)(x^{(n - 1)})$.
The DG algebra (respectively, DG$\Gamma$ algebra) $A$ is called minimal if $\partial(A) \subseteq \m A$.

Let $f :  A \rightarrow  B$ be a morphism of DG$\Gamma$ algebras, i.e., $f$ is a chain homomorphism of complexes such that $f(xy) = f(x)f(y)$, $x, y \in A$, $f(1_A) = 1_B$ and $f(x^{(n)}) = {f(x)}^{(n)}$ for any element $x \in A$ of even positive degree.
An $A$-linear chain $\Gamma$-derivation of degree $n$ on the DG$\Gamma$ algebra $B$ is a $R$-linear morphism of complexes $\eta : B \rightarrow B[n]$  such that:
\begin{enumerate}
	\item
	$\eta \circ f = 0$ ($A$-linearity), 
	\item 
	$\eta$ satisfies Leibniz rule, i.e.,  $\eta(xy) = \eta(x)y + (-1)^{n\deg(x)}x\eta(y)$ for $x, y \in B$, 
	\item
	$\eta(x^{(i)}) = \eta(x) x^{(i-1)}$ for $x \in B$, $\deg(x)$ being even, 
	\item
	$\eta$ commutes with the differential $\partial_B$ of $B$ in the graded sense, i.e., $\eta \circ \partial_B = (-1)^n \partial_B \circ \eta$.
\end{enumerate}
 A DG$\Gamma$ algebra extension $A \hookrightarrow B$ is called semi-free if $B$ is obtained from $A$ by freely adjoining a set $X$ of $\Gamma$-variables, i.e., $B = A \langle X \rangle$ \cite[Construction 6.1]{avramov1998infinite}.
Throughout this article, unless specified otherwise, a DG algebra (respectively, a DG$\Gamma$ algebra) $A$ over $R$ is assumed to be an augmented DG algebra (respectively, DG$\Gamma$ algebra) with augmentation map $A \twoheadrightarrow k$.
\end{subsection}

\subsection{Acyclic closures and Tate resolutions} \label{prl2}
 Let $(R, \m, k)$ be a local ring and  $(A, \partial)$ be an augmented DG$\Gamma$ algebra over $R$ with augmentation map $\epsilon : A \twoheadrightarrow k$.  Assume further that each $\Ho_i(A)$ is a finitely generated $R$-module. Tate \cite{tate1957homology} constructed an augmented semi-free extension $ i : A \hookrightarrow A^* = A \langle X \rangle$, $X = \{X_i\}$ by a recurrent process of adjoining sets of variables (called $\Gamma$-variables) to kill cycles such that the augmentation map $\epsilon_{A^*} : A^* \rightarrow k$ satisfies $\epsilon_{A^*} \circ i = \epsilon_A$ and $A^*$ is acyclic, i.e., $\I \Ho(A^*) = 0$ (see \cite[\S 6.3.1]{avramov1998infinite} and \cite[Theorem 1.2.3]{gulliksen1969homology}). In the literature, $A^*$ is called the acyclic closure of $A$.

The acyclic closure satisfies $\Bo(A^*) \subseteq \I(A) A^*$  \cite[Theorem 1.6.2]{gulliksen1969homology}. Following \cite[\S 1.3]{avramov1994local}, the DG algebras $A\langle X_i : \deg(X_i) \leq d \rangle, d \geq 1$ obtained in the intermediate steps of adjunction of variables to construct the acyclic closure $A^* = A\langle X \rangle$ are called partial acyclic closures of $A$.

In the special case when $A=R$, the acyclic closure of the local ring $R$ is called the Tate resolution of $k$ over $R$. Note that the Koszul algebra $K^R$ of $R$ on a minimal set of generators of $\m$ can be identified with the partial acyclic closure $R \langle X_i : \deg(X_i) \leq 1\rangle$.

Let $A^* = R \langle W \rangle$, where $W = \{W_i\}_{i \geq 1}$ with $\deg(W_i) \leq \deg(W_j)$ for $i < j$, be the acyclic closure of $R$. We recall the notion of (normal) $\Gamma$-monomials from \cite[Remark 6.2.1]{avramov1998infinite}. For two $\Gamma$-monomials $M$ and $M'$ in $A^*$, there exists an integer $n$ large enough such that $M = W_1^{(c_1)} \ldots W_n^{(c_n)}$ and $M' = W_1^{(c'_1)} \ldots W_n^{(c'_n)}$ where $c_j, c'_j \geq 0$. We define a total order on the set of $\Gamma$-monomials by declaring  $M \prec M'$ if the last nonzero entry of $\big(c_1' - c_1, \ldots, c_n' - c_n, \deg(M') - \deg(M)\big)$ is positive. If $\beta$ is an element of $A^*$, then we can express $\beta$ as $\beta = \sum f_M M$, $f_M \in A$ where the sum is taken over finitely many $\Gamma$-monomials $M$. 
We define the initial $\Gamma$-monomial of $\beta$ as $\init(\beta) = \max \{M : f_M \neq 0 \}$.
The coefficient of $\init(\beta)$ in $\beta$ is called the leading coefficient of $\beta$. The following lemma is proved in \cite[Lemma 6.3.3]{avramov1998infinite}, of which statement (3) is a minor variant and admits an identical proof.

\begin{lemma}\label{ggc2}
Let $(R, \m, k)$ be a local ring and $(A, \partial)$ be an augmented DG algebra over $R$ with augmentation map $\epsilon : A \twoheadrightarrow k$.  Assume further that each $\Ho_i(A)$ is a finitely generated $R$-module. Let $A^* = A \langle W\rangle$, $W = \{W_i\}_{i \geq 1}$ be the acyclic closure of $A$ and $F^n(A^*) = A\langle W_i : \deg(W_i) \leq n \rangle$ denote a partial acyclic closure of $A$.
Then the following hold:
	\begin{enumerate}
		\item
		There exist $A$-linear chain $\Gamma$-derivations $\nu_j$, $1 \leq j \leq n$ such that $\nu_j(W_j) = 1$ and $\nu_j(W_i) = 0$ for $i < j$.
		
		\item
		$\nu_j(F^k(A^*)) \subseteq F^k(A^*)$ for $j \leq k$.
		
		\item
		For a $\Gamma$-monomial $M = W_1^{(c_1)} \ldots W_n^{(c_n)}$, define $\nu_M = \nu_1^{c_1} \circ \ldots \circ \nu_n^{c_n}$. Then $\nu_M(M) = \pm 1$ and $\nu_M(M') = 0$ for $M' \prec M$.
		
	\end{enumerate}
\end{lemma}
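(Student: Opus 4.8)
The plan is to construct the derivations $\nu_j$ by induction on the total number of variables, building each $\nu_j$ one variable at a time as we pass from the distinguished DG$\Gamma$ algebra $A\langle W_i : i < r\rangle$ to $A\langle W_i : i \leq r\rangle$, and then verify the compatibility with the filtration and the normalization on $\Gamma$-monomials as easy consequences of the construction.

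First I would fix $j$ and build $\nu_j$. The variable $W_j$ is adjoined at some stage to an acyclic closure-in-progress $F = A\langle W_i : i < j\rangle$ in order to kill a cycle, so $\partial(W_j) = z_j \in \Zi(F)$; moreover by the axioms for adjoining $\Gamma$-variables, every subsequent $W_r$ ($r > j$) lives in $A\langle W_i : i \leq r\rangle$, which as an $A\langle W_i : i < r\rangle$-module is free on the divided-power monomials in $W_r$ (if $\deg W_r$ is even) or free on $1, W_r$ (if odd). I set $\nu_j(W_j) = 1$, $\nu_j(W_i) = 0$ for $i < j$ and for $i = r > j$ I set $\nu_j(W_r) = 0$ as well — but the point is that $\nu_j$ must still be extended over the divided powers and products so as to be a derivation commuting with $\partial$. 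Concretely, one extends $\nu_j$ from $A\langle W_i : i \leq r - 1\rangle$ to $A\langle W_i : i \leq r\rangle$ by the Leibniz rule and the divided-power rule $\nu_j(x^{(n)}) = \nu_j(x)x^{(n-1)}$, which determines $\nu_j$ uniquely on the free module once its value on $W_r$ is chosen; the only thing to check is that the value on $W_r$ can be chosen (namely $\nu_j(W_r) = 0$ works, or more precisely, one must check $\nu_j(z_r)$ is a boundary so that a value for $\nu_j(W_r)$ exists with $\partial \nu_j(W_r) = \pm \nu_j(z_r)$; since $z_r \in A\langle W_i : i < r\rangle$ which is acyclic in positive degrees relative to the relevant ideal, or by \cite[Theorem 1.6.2]{gul}, such a choice exists and we can in fact take it to be $0$ after adjusting). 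This is the heart of the argument and the step I expect to be the main obstacle: verifying that the inductive extension is consistent, i.e. that $\nu_j \circ \partial = -\partial \circ \nu_j$ holds on $A\langle W_i : i \leq r\rangle$ given that it holds one step earlier, which reduces to checking it on the generator $W_r$ and then propagating through Leibniz and divided powers — a computation that is routine in structure but requires care with signs.

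For part (2), once $\nu_j$ is built, the claim $\nu_j(F^k(A^*)) \subset F^k(A^*)$ for $j \leq k$ is immediate from the construction: $\nu_j$ is determined by its values on the $W_i$, and $\nu_j(W_i) \in \{0, 1\}$, so applying $\nu_j$ to a $\Gamma$-monomial in $W_1, \ldots, W_s$ (with $\deg W_s \leq k$) via Leibniz produces an $A$-combination of $\Gamma$-monomials in the same or fewer variables, all of degree $\leq k$; since $j \leq k$ guarantees $\nu_j$ is already defined on $F^k(A^*)$, the filtration is preserved.

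For part (3), given a $\Gamma$-monomial $M = W_1^{(c_1)}\cdots W_n^{(c_n)}$, I would compute $\nu_M(M) = (\nu_1^{c_1} \circ \cdots \circ \nu_n^{c_n})(M)$ by repeatedly applying the divided-power rule: $\nu_n^{c_n}$ applied to $W_n^{(c_n)}$ strips it down to $\pm 1$ (since $\nu_n(W_n^{(c)}) = W_n^{(c-1)}$ iterates), leaving $\pm W_1^{(c_1)}\cdots W_{n-1}^{(c_{n-1})}$, and proceeding down the list yields $\pm 1$ overall; the sign is some explicit product of $\pm 1$'s coming from the graded Leibniz rule, which I would not track precisely since only $\nu_M(M) = \pm 1$ is asserted. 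For $M' = W_1^{(c_1')}\cdots W_n^{(c_n')} \prec M$, by definition of the order the last index $r$ where the exponents differ has $c_r' < c_r$; then after applying $\nu_{r+1}^{c_{r+1}}, \ldots, \nu_n^{c_n}$ (which, acting on $M'$, either kill it or leave a monomial whose $W_r$-exponent is still $c_r'$), one reaches a stage where $\nu_r$ must be applied $c_r > c_r'$ times to something of $W_r$-divided-power degree $c_r'$, which gives $0$. One should handle the degenerate case $c_i' = c_i$ for all $i$ but $\deg M' < \deg M$ separately — but this cannot happen for genuine $\Gamma$-monomials with equal exponent vectors, so the degree entry in the order only breaks ties among monomials in variables of mixed degree, and a brief remark dispatches it. This completes the proof.
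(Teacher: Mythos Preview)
The paper does not prove this lemma; it merely remarks that the result is implicit in \cite[Theorem 1.6.2]{gul}. Your plan follows precisely Gulliksen's classical construction: set $\nu_j \equiv 0$ on $A\langle W_i : i < j\rangle$, $\nu_j(W_j) = 1$, and then extend to each subsequent $W_r$ by choosing $\nu_j(W_r)$ with $\partial\nu_j(W_r) = \pm\nu_j(\partial W_r)$, which is possible since $\nu_j(\partial W_r)$ is a cycle of degree strictly less than $\deg W_r - 1$ in a complex that is already acyclic in that range.

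There is one genuine slip, however. You assert in part (1) that one may take $\nu_j(W_r) = 0$ for $r > j$ ``after adjusting'', and in part (2) you flatly claim $\nu_j(W_i) \in \{0,1\}$ for all $i$. This is false in general: the cycle $\nu_j(\partial W_r)$ need not vanish, so $\nu_j(W_r)$ must be chosen as a \emph{nonzero} element of $A\langle W_1,\ldots,W_{r-1}\rangle$ bounding it. The lemma only asserts $\nu_j(W_i)=0$ for $i<j$, and that is all one gets. Your argument for (2) therefore fails as written; the correct reason $\nu_j$ preserves $F^k(A^*)$ is simply that the inductive construction places $\nu_j(W_r) \in A\langle W_1,\ldots,W_{r-1}\rangle \subset F^k(A^*)$ whenever $\deg W_r \leq k$. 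Your argument for (3) survives the slip: applying $\nu_n^{c_n}, \nu_{n-1}^{c_{n-1}},\ldots$ in decreasing index order, the monomial at the stage where $\nu_j$ is applied contains only the variables $W_1,\ldots,W_j$, so only the values $\nu_j(W_i)$ for $i \leq j$ are ever used, and those are indeed in $\{0,1\}$.
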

\begin{subsection}{Golod Algebras}
Now we assume that $A$ is a minimal DG$\Gamma$ algebra over a local ring  $(R, \m, k)$ such that $\Ho_0(A) = k$, $\Zi_{\geq 1}(A) \subseteq \m A$ and each $A_i$ is a free $R$-module of finite rank. In this setup, we present two equivalent definitions of Golod algebras. The first, rooted in Golod’s original work, involves trivial Massey operations. The second arises from a key inequality between formal power series,  which we took from \cite{levin1976lectures}. For readers eager to explore further characterizations, we recommend \textit{loc. cit.}.
\begin{enumerate}

\item The DG$\Gamma$ algebra $A$ is called a Golod algebra if $A$ admits a trivial Massey operation, i.e., there is a graded $k$-basis $\mathfrak{b}_R = \{h_{\lambda}\}_{\lambda \in \Lambda}$ of $\I \Ho(A)$, a function $\mu : \bigsqcup_{i = 1}^{\infty} \mathfrak{b}_R^i \rightarrow  A$ such that $\mu(h_\lambda) \in \I \Zi(A)$ with  $cls(\mu(h_\lambda))= h_\lambda$ and setting $\bar{a} = (-1)^{i+1}a$ for $a \in A_i$, one has 
\[ \partial \mu(h_{\lambda_1}, \ldots ,h_{\lambda_p}) = \sum_{j = 1}^{p-1}\overline{\mu(h_{\lambda_1}, \ldots ,h_{\lambda_j})}\mu(h_{\lambda_{j + 1}}, \ldots ,h_{\lambda_p}).\]

\vspace{1em}
\item
The DG$\Gamma$ algebra $A$ satisfies the following term-wise inequality of formal power series:
\[
P^R_k(t) \prec \frac{\Ho_{A \otimes k}(t)}{1 - t \left( \Ho_{\Ho(A)}(t) - 1 \right)},
\]
where $\Ho_{-}(t)$ denotes the Hilbert series. 
The DG$\Gamma$ algebra $A$ is said to be a Golod algebra if equality is attained \cite[Chapter~1,~\S4]{levin1976lectures}.
\end{enumerate}
\end{subsection}

\begin{subsection}{Homotopy Lie Algebras}
We assume, as before, that $A$ is a minimal DG$\Gamma$-algebra over a local ring $(R, \m, k)$ such that $\Ho_0(A) = k$, and each $A_i$ is a free $R$-module of finite rank. We define $\Tor^A(k, k) = \Ho(k \otimes_A A^*)$, $A^*$ being the acyclic closure of $A$ and $k \otimes_A A^* = \frac{k \otimes_R A^*}{\langle 1 \otimes a : a \in \ker \epsilon \rangle}$. We have a Hopf algebra structure on $\Tor^A(k, k)$ and the graded $k$-vector space dual $\Tor^A(k, k)^{\vee}$ is the universal enveloping algebra of a uniquely defined graded Lie algebra over $k$ (see \cite[Theorems 1.1, 1.2]{avramov1983local}). This Lie algebra is called the homotopy Lie algebra of $A$ and is denoted by $\pi(A)$. 

Set $\I\Tor^A(k, k)= \ker\{\tilde{\epsilon} : \Tor^A(k, k) \twoheadrightarrow k\}$. Let $\I\Tor^A(k, k)^{(2)}$ be the ideal of $\Tor^A(k, k)$ generated by products $ab$ for $a, b \in \I\Tor^A(k, k)$ and divided powers $x^{(2)}$ of elements $x$ of even positive degrees in $\I\Tor^A(k, k)$ . Then $\pi(A) = \Hom_k(\frac{\I\Tor^A(k, k)}{\I\Tor^A(k, k)^{(2)}}, k)$ (see \cite[\S 3.1]{avramov2006golod}). It follows that $\pi(A)$ is the graded vector space dual of the space of $\Gamma$-indecomposable elements in $\I\Tor^A(k, k)$.
\end{subsection}

The following result follows from a blend of \cite[Theorem 1.3]{levin1976lectures}, \cite[Theorem 2.3]{avramov2006golod}.

\begin{theorem}\label{prl7}
Let $(R, \m, k)$ be a local ring and $A$ be a  minimal DG$\Gamma$ algebra over $R$ such that $\Ho_0(A) =  k$ and each $A_i$ is a free $R$-module of finite rank. Let $A^*$ be the acyclic closure of $A$. Consider $A$ augmented naturally. 
Then the following are equivalent.
\begin{enumerate}
\item
The DG algebra $A$ is a Golod algebra.

\item
We have $\Zi_{\geq 1}(A) \subseteq \m A$ and the inclusion $i : \m A \hookrightarrow \m A^*$ induces an injective map $\Ho(\m A) \hookrightarrow \Ho(\m A^*) = \Tor^R(\m, k)$.

\item
 The map $\Ho(A \otimes_R k) \rightarrow \Ho(A^* \otimes_R k)=\Tor^R(k, k)$ is injective and $A$ admits a trivial Massey operation.

\item
The map $\Ho(A \otimes_R k) \rightarrow \Tor^R(k, k)$ is injective and the homotopy Lie algebra $\pi(A)$ is a free Lie algebra.

\end{enumerate}
\end{theorem}

The condition that the map $\Ho(A \otimes_R k) \rightarrow \Tor^R(k, k)$ is injective, is automatically satisfied when $A$ is a partial acyclic closure of a local ring $R$. 

\begin{definition}{\rm(Generalized Golod rings)}\label{ggr}
Let $R\langle X \rangle$ denote the acyclic closure of a local ring $R$. Avramov \cite[\S 1.7]{avramov1994local} defined a ring $R$ to be a generalized Golod ring of level $n$ if the partial acyclic closure $R\langle X_i : \deg(X_i) \leq n \rangle$ of $R$ is a Golod algebra, equivalently $\pi^{>n}(R)$ is a free Lie algebra. 
\end{definition}

Let $M$ be a module over a local ring $R$. The length of $M$ is denoted by $\ell(M)$ and the minimal number of generators of $M$ is denoted by $\mu(M)$. Moreover if $R$ is an Artinian ring, the socle of $R$ is defined as $\soc(R) = (0 : _R \m)$ and the Loewy length of $R$ as  $\lo(R) = \mx \{ n : \m^n \not=0\}$.

\begin{definition}\label{prl10}
Let $(R, \m_R, k)$, $(S, \m_S, k)$ be local rings with a common residue field $k$. Let $\pi_R : R \rightarrow k$, $\pi_S : S \rightarrow k$ be natural quotient maps from $R$, $S$ onto $k$ respectively. The fiber product of $R$ and $S$ over $k$ is defined as the ring $R \times_k S = \{(r, s) \in R \times S : \pi_R(r) = \pi_S(s) \}$. The rings $R$, $S$ are modules over $R \times_k S$ by left actions defined by projection maps $(r, s) \mapsto r$, $(r, s) \mapsto s$ respectively. 
      
Now we assume further that both $R$, $S$ are Artinian Gorenstein local rings with one dimensional socles $\soc(R) = \langle \delta_R \rangle$, $\soc(S) = \langle \delta_S \rangle$. Then the connected sum of $R$ and $S$ is defined as 
\[R \#  S = \frac{R \times_k S}{ \langle (\delta_R, - \delta_S)\rangle}.\]
We say a Gorenstein local ring $Q$ is decomposable as a connected sum if there are Artinian Gorenstein local rings $R$, $S$ such that $Q = R \#  S$, $\ell(R) < \ell(Q)$ and $\ell(S) < \ell(Q)$. 
\end{definition} 
We recall the following facts without specific reference.
\begin{enumerate}
\item The fiber product $T = R \times_k S$ is a local ring with maximal ideal $\m_R \oplus \m_S$ and the connected sum $Q = R \# S$ is an Artinian Gorenstein local ring whenever both $R$ and $S$ are.

\item Let $\m_T, \m_Q$ be the unique maximal ideals of $T$, $Q$ respectively. Then $\m^i_T = \m^i_R \oplus \m^i_S$ for $i \geq 1$. The maps $\m_R \rightarrow \m_Q$, $m \mapsto (\bar{m}, 0)$ and $\m_S \rightarrow \m_Q$, $n \mapsto (0, \bar{n})$ are injective.
We have $\lo(Q) = \max\{\lo(R), \lo(S)\}$ and $Q/ \m_Q^n  \cong R/ \m_R^{\min\{n, \lo(R)\}} \times_k S/ \m^{\min\{n, \lo(S)\}}_S$ for $2 \leq n \leq \lo(Q)$.

\item
 $\ell(R \times_k S) = \ell(R) + \ell(S) - 1$ and $\ell(R \#  S) = \ell(R) + \ell(S) - 2$.
\end{enumerate}

\section{Acyclic closures of $R/\soc(R)$ and $S \times_k T$}\label{sec:ben1}
Let $S$ and $T$ be local rings with common residue field $k$, and let $R$ be an Artinian Gorenstein local ring.
This section is devoted to the construction of the acyclic closure of $S \times_k T$ from those of $S$ and $T$, as well as the acyclic closure of $R/\soc(R)$ from that of $R$. These acyclic closures will play a crucial role in the subsequent section, where we examine the generalized Golod property of such rings.

\begin{lemma}\label{ggfl2}
Let $(S, \p, k)$ and $(T, \q, k)$ be two local rings, and set $R = S \times_k T$.  
Let $(S\langle X \rangle, \partial_1)$ and $(T \langle Y \rangle, \partial_2)$ be semi-free minimal DG$\Gamma$-algebra extensions of $S$ and $T$, respectively. Then there exist semi-free DG$\Gamma$ $R$-algebra extensions $(R\langle X \rangle, \tilde{\partial}_1)$ and $(R\langle Y \rangle, \tilde{\partial}_2)$ such that
\[
\tilde{\partial}_{1}(R\langle X \rangle) \subseteq \p R\langle X \rangle, \quad 
\tilde{\partial}_{2}(R\langle Y \rangle) \subseteq \q R\langle Y \rangle, \quad
S \otimes_R R\langle X \rangle \cong S\langle X \rangle, \quad
T \otimes_R R\langle Y \rangle \cong T\langle Y \rangle.
\]

Assume further that $\Zi_{\geq 1}(S \langle X \rangle) \subseteq \p S \langle X \rangle$ and $\Zi_{\geq 1}(T \langle Y \rangle) \subseteq \q T \langle Y \rangle$.  
Let $R\langle X \sqcup Y \rangle = R\langle X \rangle \otimes_R R\langle Y \rangle$. Then $R\langle X \sqcup Y \rangle$ is a Golod algebra if and only if both $S\langle X \rangle$ and $T\langle Y \rangle$ are Golod algebras.
\end{lemma}

\begin{proof}
We begin by constructing the semi-free extension $(R\langle X \rangle, \tilde{\partial}_1)$.
For $x \in X$, let $ \partial_1(x) = \sum p_{\underline{x}} \, \underline{x}$ in $S\langle X \rangle$. To define the differential $\tilde{\partial}_{1}$ on $R\langle X \rangle$, we set $\tilde{\partial}_1(x) = \sum i_1(p_{\underline{x}}) \underline{x}$ where $i_1 : \p \hookrightarrow \p \oplus \q$ is the natural inclusion and then extend $ \tilde{\partial}_1$ to a degree $(-1)$ map on $R\langle X \rangle$ by $R$-linearity and the Leibniz rule. It is clear that $(R\langle X \rangle, \tilde{\partial}_1)$ is a DG$\Gamma$  $R$-algebra satisfying $\tilde{\partial}_{1}(R\langle X \rangle) \subseteq \p R\langle X  \rangle$ and that there is an isomorphism of DG$\Gamma$  algebras $S \otimes_R R\langle X \rangle = S\langle X \rangle$. The construction of $(R\langle Y \rangle, \tilde{\partial}_2)$ proceeds analogously. 

 We have the following exact sequence of $R$-modules.
\[0 \rightarrow R \xrightarrow{\alpha} S \times T \xrightarrow{\beta} k \rightarrow 0,\]
where $\alpha(s, t) = (s, t)$ and $\beta(s, t) = s - t$. Tensoring this sequence with
 $R \langle X \sqcup Y \rangle = R\langle {X} \rangle \otimes_R R\langle Y \rangle$, we obtain the following short exact sequence of complexes of $R$-modules.

\[0 \rightarrow R \langle X \sqcup Y \rangle \xrightarrow{\alpha \otimes id} (S \otimes_R R \langle X \sqcup Y \rangle) \oplus (T \otimes_R R \langle X \sqcup Y \rangle) \xrightarrow{\beta \otimes id} k \otimes_R R\langle X \sqcup Y \rangle \rightarrow 0.\]

Here $id$ denotes the identity map on $R \langle X \sqcup Y \rangle$. Note that  $S \otimes_R R\langle {X \sqcup Y} \rangle = S\langle {X} \rangle \otimes_S S\langle Y \rangle$. The differential $S \otimes_R \tilde{\partial}_2$ is zero on $S\langle Y \rangle$. Therefore, $\Ho(S \otimes_R R \langle X \sqcup Y \rangle) = \Ho (S \langle X \rangle) \otimes_k k\langle Y \rangle$. Similarly we have $\Ho(T \otimes_R R \langle X \sqcup Y \rangle) = k\langle X \rangle  \otimes_k \Ho (T \langle Y \rangle)$. The differential on $k \otimes_R R\langle X \sqcup Y \rangle$ is zero. Therefore, the long exact sequence of homology takes the form  
\[ k \langle X \sqcup Y \rangle[+1] \xrightarrow{\partial} \Ho(R \langle X \sqcup Y\rangle) \xrightarrow{\Ho(\alpha \otimes id)} (\Ho (S \langle X \rangle) \otimes_k k\langle Y \rangle) \oplus (k\langle X \rangle  \otimes_k \Ho (T \langle Y \rangle)) \xrightarrow{\Ho(\beta \otimes id)} k \langle X \sqcup Y \rangle.\]

Let $\ker (\Ho(\beta \otimes id)) = K$ and $\coker (\Ho(\beta \otimes id)) = C$. Then we have the equality of Hilbert series: $\Ho_{\Ho(R\langle X \sqcup Y\rangle)}(t) = \frac{1}{t}\Ho_C(t) + \Ho_K(t)$ where 
$\Ho_{\Ho(R\langle X \sqcup Y\rangle)}(t) = \sum_{i \geq 0} \dim_k \Ho_i (R\langle X \sqcup Y\rangle)t^i$, 
$\Ho_C(t) = \sum_{i \geq 0} \dim_k C_it^i$ and $\Ho_K(t) = \sum_{i \geq 0} \dim_k K_i t^i$.

The quotient maps $\pi_S : S \rightarrow k$ and $\pi_T : T \rightarrow k$ induce maps $\pi_{S\langle X \rangle} : S\langle X \rangle \rightarrow k\langle X \rangle$ and $\pi_{T\langle Y \rangle} : T\langle Y \rangle \rightarrow k\langle Y \rangle$, respectively. Choose cycles $z \in \Zi(S\langle X \rangle)$, $z' \in \Zi(T\langle Y \rangle)$ and elements $u \in k\langle X \rangle$ and $v \in k\langle Y \rangle$. Let $[z]$, $[z']$ denote homology classes of $z$, $z'$, respectively. Note that $\Ho(\beta \otimes id)([z]\otimes v, u \otimes [z']) = \pi_{S\langle X \rangle}(z)v - u \pi_{T\langle Y \rangle}(z')$. 

By the given hypothesis $\Zi_{\geq 1}(S \langle X \rangle) \subseteq \p S \langle X \rangle$ and $\Zi_{\geq 1}(T \langle Y \rangle) \subseteq \q T \langle Y \rangle$, so $\Ho(\beta \otimes id)$ is zero on $(\Ho_{\geq 1} (S \langle X \rangle) \otimes_k k\langle Y \rangle) \oplus (k\langle X \rangle  \otimes_k \Ho_{\geq 1} (T \langle Y \rangle))$. The restriction of $\Ho(\beta \otimes id)$ on 
\[({\Ho}_0 (S \langle X \rangle) \otimes_k k\langle Y \rangle) \oplus (k\langle X \rangle  \otimes_k {\Ho}_0 (T \langle Y \rangle)) = k\langle Y \rangle \oplus k \langle X \rangle\] has a one-dimensional kernel generated by $(1, 1)$.
Therefore, we have 
\[K =  ({\Ho}_{\geq 1} (S \langle X \rangle) \otimes_k k\langle Y \rangle) \oplus (k\langle X \rangle  \otimes_k {\Ho}_{\geq 1} (T \langle Y \rangle)) \oplus k(1, 1) \ \text{and} \ C = \frac{k\langle X \sqcup Y \rangle}{k \oplus k\langle X \rangle_{\geq 1} \oplus k\langle Y \rangle_{\geq 1}}.\] Note that $k\langle {X \sqcup Y} \rangle = k\langle {X} \rangle \otimes_k k\langle Y \rangle$, so ${\Ho}_{k\langle X \sqcup Y  \rangle}(t) = \Ho_{k\langle X \rangle}(t) \Ho_{k\langle X \rangle}(t)$. Moreover
\begin{align*}
{\Ho}_{\Ho(R\langle X \sqcup Y\rangle)}(t) &= \frac{1}{t}{\Ho}_C(t) + {\Ho}_K(t)\\
&=\frac{1}{t}[{\Ho}_{k\langle X \rangle}(t){\Ho}_{k\langle Y \rangle}(t) - ({\Ho}_{k\langle X \rangle}(t) -1) - ({\Ho}_{k\langle Y \rangle}(t) - 1) -1]\\
& + ({\Ho}_{\Ho(S\langle X \rangle)}(t) -1){\Ho}_{k\langle Y \rangle}(t) + {\Ho}_{k\langle X \rangle}(t)({\Ho}_{\Ho(T\langle Y \rangle)}(t) -1) +1\\
&= 1 + {\Ho}_{k\langle Y \rangle}(t)({\Ho}_{\Ho(S\langle X \rangle)}(t) -1) + {\Ho}_{k\langle X \rangle}(t)({\Ho}_{\Ho(T\langle Y \rangle)}(t) -1) + \frac{1}{t}({\Ho}_{k\langle X \rangle}(t) -1)({\Ho}_{k\langle Y \rangle}(t) - 1).
\end{align*}
The above equality gives 
\[1 - t({\Ho}_{\Ho(R\langle X \sqcup Y \rangle)} (t) - 1) = {\Ho}_{k\langle X \rangle}(t)[ 1 - t  ({\Ho}_{\Ho(T\langle Y \rangle)}(t) - 1)] + {\Ho}_{k\langle Y \rangle}(t)[ 1 - t  ({\Ho}_{\Ho(S\langle X \rangle)}(t) - 1)] - {\Ho}_{k\langle X \rangle}(t){\Ho}_{k\langle Y \rangle}(t).\]

We have a termwise inequality of power series:
\begin{equation}\label{ggfle1}
P^S_k(t) \prec \frac{{\Ho}_{k\langle X \rangle}(t)}{1 - t ({\Ho}_{\Ho(S\langle X \rangle)}(t) - 1)} \ \text{and} \ P^T_k(t) \prec \frac{{\Ho}_{k\langle Y \rangle}(t)}{1 - t ({\Ho}_{\Ho(T\langle Y \rangle)}(t) - 1)}.
\end{equation}
\begin{align*}
\text{We have}\quad &\frac{1 - t({\Ho}_{\Ho(R\langle X \sqcup Y\rangle)} (t) - 1)}{{\Ho}_{k\langle X \sqcup Y \rangle}(t)}\\
&= \frac{{\Ho}_{k\langle X \rangle}(t)[ 1 - t ( {\Ho}_{\Ho(T\langle Y \rangle)}(t) - 1)] + {\Ho}_{k\langle Y \rangle}(t)[ 1 - t ( {\Ho}_{\Ho(S\langle X \rangle)}(t) - 1)] - {\Ho}_{k\langle X \rangle}(t){\Ho}_{k\langle Y \rangle}(t)}{{\Ho}_{k\langle X \rangle}(t){\Ho}_{k\langle Y \rangle}(t)}\\
&=\frac{1 - t  ({\Ho}_{\Ho(T\langle Y \rangle)}(t) - 1)}{{\Ho}_{k\langle Y \rangle}(t)} + \frac{1 - t  ({\Ho}_{\Ho(S\langle X \rangle)}(t) - 1)}{{\Ho}_{k\langle X \rangle}(t)} - 1\\
&\prec  \frac{1}{P^T_k(t)}  + \frac{1}{P^S_k(t)} - 1 \  \ \text{from \eqref{ggfle1}}\\
&=  \frac{1}{P^R_k(t)}.
\end{align*}

The DG$\Gamma$ algebra $R\langle X \sqcup Y \rangle$ is a Golod algebra if and only if the above inequality is an equality. This is equivalent to requiring that both inequalities in \eqref{ggfle1} are equalities, i.e., both $S\langle X \rangle$ and $T\langle Y \rangle$ are Golod algebras. 
\end{proof}

In what follows, we construct the acyclic closure of $R = S \times_k T$ by adjoining variables to kill the cycles of the DG algebra $R \langle X \sqcup Y \rangle$, as defined in the previous lemma.

\begin{proposition}\label{ggfl3}
Let $(S, \p, k)$ and $(T, \q, k)$ be local rings. Let $(S\langle X \rangle, \partial_1)$ and $(T\langle Y \rangle,  \partial_2)$ be acyclic closures of $S$, $T$ respectively. Set $R = S \times_k T$. Then $R$ admits an acyclic closure of the form $(R\langle X, Y, Z \rangle, \partial)$.
\end{proposition}

\begin{proof}
Set $U^n = S \langle X_i : \deg(X_i) \leq n \rangle$ and $V^n = T \langle Y_i : \deg(Y_i) \leq n \rangle$, $n \geq 1$. Then $U^n$ and $V^n$, $n \geq 1$ define filtrations of the acyclic closures $S\langle X \rangle$ and $T\langle Y \rangle$ respectively. The augmentation maps of $S\langle X \rangle$, $T\langle Y \rangle$ restrict to augmentation maps of $U^n$, $V^n$ respectively.
From the construction of acyclic closures, it follows that $\I \Ho_i(U^n) = \I \Ho_i(V^n) = 0$ for $0 \leq i \leq n - 1$. Moreover the homology classes of the cycles $\partial_1(X_i)$ with $\deg(X_i) = n + 1$ minimally generate $\I \Ho_n(U^n)$ and the homology classes of the cycles $\partial_2(Y_i)$ with $\deg(Y_i) = n + 1$ minimally generate $\I \Ho_n(V^n)$. 
We construct 

\[\tilde{U}^n = R \langle X_i : \deg(X_i) \leq n\rangle \ \text{and} \ \tilde{V}^n = R \langle Y_i : \deg(Y_i) \leq n\rangle,\ n \geq 1\]
as in Lemma \ref{ggfl2}. Let 
\[p^n : \tilde{U}^n \twoheadrightarrow \tilde{U}^n \otimes_R S = U^n \ \text{and} \ q^n : \tilde{V}^n \twoheadrightarrow \tilde{V}^n \otimes_R S = V^n, n \geq 1\] be natural quotient maps.

We construct inductively a sequence of augmented $DG\Gamma$ algebras $(F^nR, \partial^n)$ with natural augmentation $\epsilon^n : F^nR \twoheadrightarrow k$ satisfying the following properties: 
\begin{enumerate}
	\item
	$F^1R = K^R$, the Koszul complex of $R$,
	
	\item
	$\I\Ho_i(F^nR) = 0$ for $0 \leq i \leq  n - 1$,
	
	\item
	There are inclusions $\alpha^n : \tilde{U}^n \hookrightarrow F^nR$ and $\beta^n : \tilde{V}^n \hookrightarrow F^nR$ of complexes,
	
	\item
	There are surjective DG$\Gamma$ algebra homomorphisms $\phi^n : F^nR \twoheadrightarrow U^n$ and $\psi^n : F^nR \twoheadrightarrow V^n$ such that \[\phi^n \circ \alpha^n = p^n, \psi^n \circ \beta^n = q^n; (\phi^n \circ \beta^n)_{\geq 1} = 0, (\psi^n \circ \alpha^n)_{\geq 1} = 0 \ \text{and} \ (\phi^n \circ \beta^n)_{0} = pr_1, (\psi^n \circ \alpha^n)_{0} = pr_2,\]
	where $pr_1 : R \rightarrow S$ and $pr_2 : R \rightarrow T$ are natural projection maps.
\end{enumerate}

When $n = 1$, 
we define $F^1R = \tilde{U}^1 \otimes_R \tilde{V}^1 = R \langle X_i, Y_j : \deg(X_i) = \deg(Y_j) = 1 \rangle$, the Koszul complex on a minimal set of generators of $\p \oplus \q$ with the natural augmentation $\epsilon^1$. Define $\alpha^1 : \tilde{U}^1 \hookrightarrow F^1R$, $\beta^1 : \tilde{V}^1 \hookrightarrow F^1R$ to be natural inclusions and 
\[\phi^1 : F^1R \twoheadrightarrow \frac{F^1R}{(Y_i : \deg Y_i = 1)F^1 R} \otimes_R S = U^1, \quad \psi^1 : F^1R \twoheadrightarrow \frac{F^1R}{(X_i : \deg X_i = 1)F^1 R} \otimes_R T = V^1\] to be natural quotient maps. The properties $(1) - (4)$ are satisfied immediately.

Now we assume that $F^nR$, $n \geq 2$ is constructed with the aforesaid  properties. Let 
\begin{align*}
U^{n +1}  &= U^n \langle X_{c+1}, \ldots, X_{c + u} \rangle,  \partial_1(X_{c+i}) = s_i \ \text{for} \ 1 \leq i \leq u, \\
V^{n+1} &= V^n \langle Y_{d+1}, \ldots, Y_{d+v}\rangle, \partial_2(Y_{d+j}) = t_j \ \text{for} \ 1 \leq j \leq v.
\end{align*}

Then the homology classes of cycles $s_i, 1 \leq i \leq u$ form a basis of $\Ho_n(U^n)$ and homology classes of cycles $t_j, 1 \leq j \leq v$ form a basis of $\Ho_n(V^n)$. Note that cycles $s_i \in \p U^n$ and $t_j \in \q V^n$, so we can view cycles $s_i$, $t_j$ as elements in $\tilde{U}^n$ and $\tilde{V}^n$ respectively. We have 
\[\phi^n \circ \alpha^n (s_i) = s_i, \psi^n \circ \alpha^n (s_i) = 0, \psi^n \circ \beta^n(t_j) = t_j \  \text{and} \ \phi^n \circ \beta^n(t_j) = 0.\] It is now easy to see that homology classes of cycles $\alpha^n(s_i)$, $\beta^n(t_j)$,
$1 \leq i \leq u$, $1 \leq j \leq v$ are linearly independent in $\Ho_n(F^n R)$.

We choose cycles $z_k$, $1 \leq k \leq w$ in $\Zi_n(F^nR)$ such that homology classes of cycles $\alpha^n(s_i)$, $\beta^n(t_j)$, $z_k$, $1 \leq i \leq u$, $1 \leq j \leq v$, $1 \leq k \leq w$ form a basis of $\Ho_n(F^n R)$. We subtract a finite $R$-linear combination of $\alpha^n(s_i)$, $\beta^n(t_j)$ from $z_k$, if necessary,  to assume that $\phi^n(z_k)$  is a boundary in $U^n$ and $\psi^n(z_k)$ is a boundary in $V^n$. We choose $f_k \in U^n$, $g_k \in V^n$ of degree $n + 1$ such that $\partial_1(f_k) = \phi^n(z_k)$ and $\partial_2(g_k) = \psi^n(z_k)$. Since $p^n, q^n$ are surjective, we have $\tilde{f}_k \in \tilde{U}^n$,  $\tilde{g}_k \in \tilde{V}^n$ such that $p^n(\tilde{f}_k) = f_k$ and $q^n(\tilde{g}_k) = g_k$. We define 

\[F^{n +1} R = F^n R \langle X_{c + i}, Y_{d + j}, Z_k : 1 \leq i \leq u, 1 \leq j \leq v, 1 \leq k \leq w \rangle\] and extend the differential  $\partial^n$ from $F^n R$ to a differential $\partial^{n+1}$ on $F^{n+1} R$ by setting $\partial^{n+1}(X_{c + i}) = \alpha^n(s_i)$, $\partial^{n+1}(Y_{d + j}) = \beta^n(t_j)$ and $\partial^{n+1}(Z_j) = z_j$. 
The natural augmentation $\epsilon^n$ extends to the augmentation $\epsilon^{n+1} : F^{n+1}R \twoheadrightarrow k$. Clearly $\I \Ho_i(F^{n+1}R) = 0$ for $1 \leq i \leq n$.

By construction, the inclusions $\alpha^n$, $\beta^n$ extend to inclusions 
\[\alpha^{n + 1} : \tilde{U}^{n + 1} \hookrightarrow F^{n + 1}R \ \text{ and} \  \beta^{n + 1} : \tilde{V}^{n + 1} \hookrightarrow F^{n + 1}R\] of chain complexes respectively.
We only construct the map $\phi^{n+1}$. The construction of $\psi^{n+1}$ follows similarly. Let $I^n = \ker \phi^n$. Then 
\begin{align*}
	\phi^n \partial^{n+1}(Z_k - \alpha^n( \tilde{f}_k)) &= \phi^n \partial^{n+1}(Z_k) - \phi^n \partial^{n + 1} \alpha^n (\tilde{f}_k)\\
	&= \phi^n (z_k) - \partial_1 \phi^n \alpha^n  (\tilde{f}_k) = \phi^n (z_k) -  \partial_1 p^n  (\tilde{f}_k) = \phi^n (z_k) - \partial_1(f_k) = 0.
\end{align*} 
Therefore,  $\partial^{n+1}(Z_k - \alpha^n (\tilde{f}_k)) \in I^n$. We also have $\partial^{n+1}(Y_{d+j}) = \beta^n(t_j ) \subset I^n$.
It follows that 
\[I^{n + 1} = I^n + (Y_{d + j} ; 1 \leq j \leq v) + (Z_k - \alpha^n (\tilde{f}_k) ; 1 \leq k \leq w)\] is a DG$\Gamma$ ideal of $F^{n+1}R$.

Define $\phi^{n+1} : F^{n+1}R \rightarrow U^{n + 1}$ as the composition 
\[F^{n+1}R \rightarrow  \frac{F^{n+1}R}{I^{n +1}}  \rightarrow \frac{F^nR}{I^n}\langle X_{c+1}, \ldots, X_{c + u} \rangle \rightarrow U^{n +1}.\] Here the first map is the quotient map, the second map is induced by $Y_{d +j} \mapsto 0$, $Z_k \mapsto \alpha^n (\tilde{f}_k)$ and the last one is induced by the isomorphism $\frac{F^nR}{I^n} \cong U^n$.
Clearly,  $\phi^{n+1} \circ \alpha^{n+1} = p^{n+1}$ and $\phi^{n+1} \circ \beta^{n+1} = 0$. Thus, the induction step is complete.

By construction, $R\langle X, Y, Z \rangle = \varprojlim F^nR$ is the acyclic closure of $R$. This completes the proof. 
\end{proof}

The next lemma builds on the ideas of \cite[Theorem 2]{levin1978factoring} by Avramov and Levin.

\begin{lemma}\label{sbr1}
Let $(R, \m)$ be an Artinian Gorenstein local ring of embedding dimension $n \geq 2$. Let $R\langle X \rangle$ be the acyclic closure of $R$, $K^R = R \langle X_j : 1 \leq j \leq n\rangle$ be the Koszul complex of $R$ and  \[\mathfrak{X} = R \langle X_j : 1 \leq j \leq n + i \rangle, i \geq 1\] be a partial acyclic closure of $R$.  Let $\soc(R) = (t)$, $\bar{R} = \frac{R}{tR}$ and $\bar{ \mathfrak{X}} = \bar{R} \otimes_R \mathfrak{X}$. Choose  cycles $z \in \Zi_1(K^R)$ and $z' \in \Zi_{n-1}(K^R)$ such that $\partial{X_{n+1}} = z$ and $zz' = tX_1 \ldots X_n$. Define 
\[\mathfrak{Y} = \{ x \in \mathfrak{X} : z' \partial(x) \in \soc(R) X_1\ldots X_n\mathfrak{X}\}\] and  $\bar{\mathfrak{Y}} = \bar{R} \otimes_R \mathfrak{Y}$.
Then the following hold:
\begin{enumerate}
\item $\soc(R)X_1\ldots X_n \mathfrak{X} =  z' \Bo (\mathfrak{Y})$.
\item $\soc(R) \mathfrak{X} = (0 : \m^2)\Bo (\mathfrak{X}) + z' \Bo (\mathfrak{Y}) \subseteq \m \Bo (\mathfrak{X})$.
\item $\Zi(\bar{\mathfrak{X}}) = (0 : \m^2) \bar{\mathfrak{X}} + z' \bar{\mathfrak{Y}} + \bar{R} \otimes_R \Zi(\mathfrak{X})$.
\item The quotient map $\mathfrak{X} \twoheadrightarrow \bar{\mathfrak{X}} $ induces injective maps $\Ho(\mathfrak{X}) \rightarrow \Ho(\bar{\mathfrak{X}})$ and $\Ho(\m \mathfrak{X}) \rightarrow \Ho(\m \bar{\mathfrak{X}})$ on homology.
\item For any two cycles $v \in \I \Zi(\mathfrak{X})$, $w \in \Zi(z' \mathfrak{X})$, there exists an element $v' \in z' \mathfrak{X}$ such that $vw = \partial(v')$.
\item For any cycle $\bar{v} \in \I \Zi(\bar{\mathfrak{X}}), \bar{w} \in \Zi(z' \bar{\mathfrak{X}})$, there exists an element $\bar{v'} \in z' \bar{\mathfrak{X}} + (0 : \m^2)\bar{\mathfrak{X}}$ such that $\bar{v}\bar{w} = \partial(\bar{v'})$.
\end{enumerate}
\end{lemma}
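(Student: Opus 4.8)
The plan is to prove the seven statements roughly in the stated order, since each later item leans on the earlier ones. The overall strategy is to exploit the Poincaré-duality structure of $\Ho(K^R)$ together with the derivation machinery of Lemma~\ref{ggc2}: the key computational device throughout is that multiplication by $z'$ (or by elements of $(0:\m^2)$) sends a cycle into a boundary, because $zz' = tX_1\cdots X_n$ and $\partial(X_{n+1}) = z$. For (1), I would argue as follows. Fix a cycle $c \in \Zi_i(K^R)$ and an element $s \in \soc(R)$. By Poincaré duality there is a cycle $c' \in \Zi_{n-i}(K^R)$ with $cc' = uX_1\cdots X_n$ for a unit $u$; multiplying $s$ by $c$ and pairing with $c'$ shows $sc$ lies in the socle part of $K^R_i$, and the explicit relation $\partial(X_{n+1}c') = zc' \pm X_{n+1}\partial(c') = zc'$ (since $c'$ is a cycle) together with $zz' = tX_1\cdots X_n$ lets me write $sc = \partial(\text{something in }(0:\m^2)K^R_{i+1})$. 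The refinement $(0:\m^2)\Bo_{i+1}(K^R) \subset \m\Bo_{i+1}(K^R)$ is immediate since $(0:\m^2)\subset \m$ when $\edim(R)\ge 2$.

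**The middle statements.** For (2), the inclusion $z'\Bo(\mathfrak Y)\subset \soc(R)X_1\cdots X_n\mathfrak X$ is the definition of $\mathfrak Y$. For the reverse, given $s\in\soc(R)$ I use that $X_{n+1}$ kills $z$ via $\partial$, so $\partial(X_1\cdots X_n X_{n+1}) = zX_1\cdots X_n \pm X_1\cdots X_n z = $ (up to sign) $zX_1\cdots X_n$, and multiplying by $z'$ converts $sX_1\cdots X_n$ into a $z'$-multiple of a boundary of an element of $\mathfrak Y$ — here one checks the element being hit lies in $\mathfrak Y$ by verifying $z'$ times its differential lands in $\soc(R)X_1\cdots X_n\mathfrak X$. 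Statement (3) combines (1) (applied to the Koszul part, which controls $\soc(R)\mathfrak X$ modulo $\Gamma$-monomials in $X_{\ge n+1}$) with (2) for the part involving $X_1\cdots X_n$; an induction on the $\Gamma$-monomial order, peeling off initial $\Gamma$-monomials using the derivations $\nu_M$ of Lemma~\ref{ggc2}, handles the general element of $\soc(R)\mathfrak X$. Statement (4) is then a direct consequence: a cycle $\bar v\in\Zi(\bar{\mathfrak X})$ lifts to $v\in\mathfrak X$ with $\partial(v)\in tR\,\mathfrak X = \soc(R)\mathfrak X$, and (3) rewrites $\partial(v)$ as a boundary of an element of $(0:\m^2)\mathfrak X + z'\mathfrak Y$, so $v$ minus that element is a genuine cycle of $\mathfrak X$; reducing mod $t$ gives the decomposition. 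Statement (5) uses (3) again: injectivity of $\Ho(\mathfrak X)\to\Ho(\bar{\mathfrak X})$ means a cycle of $\mathfrak X$ that becomes a boundary mod $t$ was already a boundary, which follows because $\soc(R)\mathfrak X\subset\m\Bo(\mathfrak X)$ forces the correction term to be an honest boundary; the $\m\mathfrak X$ version is the same argument carried inside $\m\mathfrak X$, using $(0:\m^2)\subset\m$ and $z'\in\m K^R$ to keep everything in $\m\Bo(\mathfrak X)\subset\Bo(\m\mathfrak X)$.

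**The final two statements — the heart of the lemma.** For (6), given cycles $v\in\I\Zi(\mathfrak X)$ and $w = z'x \in\Zi(z'\mathfrak X)$, the product $vw$ is a cycle, and the point is to produce a primitive for it living in $z'\mathfrak X$. I would write $\partial(X_{n+1}) = z$, observe $vX_{n+1}z' $ has differential $vz z' = v\cdot tX_1\cdots X_n$ up to sign (since $v$ is a cycle), and then massage: because $\mathfrak X$ is a distinguished DG$\Gamma$ algebra, cycles lie in $(\I R)\mathfrak X$, so $vw$ already has coefficients deep enough that acyclicity of $\mathfrak X$ in the relevant range produces a primitive, and one then projects that primitive into $z'\mathfrak X$ using that $w$ itself is divisible by $z'$ and tracking the $\Gamma$-monomial supports via the derivations $\nu_j$. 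Statement (7) is the mod-$t$ analogue: lift $\bar v,\bar w$ to $v,w$ in $\mathfrak X$ with $v$ a cycle and $\partial(w)\in\soc(R)\mathfrak X$; by (2) adjust $w$ by an element so that $\partial(w)\in z'\Bo(\mathfrak Y)$ gets absorbed, making $w$ a genuine cycle of $z'\mathfrak X$ plus an error in $(0:\m^2)\mathfrak X$; apply (6) to the cycle part to get a primitive in $z'\mathfrak X$, handle the $(0:\m^2)$-error separately via (1)/(3) (it contributes a primitive in $(0:\m^2)\mathfrak X$), and reduce the whole equation mod $t$. The main obstacle, I expect, is (6): controlling precisely which $\Gamma$-monomials appear in the primitive so that it genuinely lies in $z'\mathfrak X$ rather than merely in $\mathfrak X$ — this is where the careful ordering of variables, the derivations $\nu_j$ of Lemma~\ref{ggc2}, and the identity $zz' = tX_1\cdots X_n$ must be combined with some care, and where an induction on $\Gamma$-monomial degree seems unavoidable. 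Statements (1)–(5) are comparatively routine once (1) and (2) are in hand.
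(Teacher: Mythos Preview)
Your outline captures the logical dependencies correctly --- (4) and (5) do follow from (3), and (7) does reduce to (6) plus error terms handled by (2) and (3) --- but the concrete arguments you sketch for (1) and especially (6) have genuine gaps.

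For (1) you invoke $X_{n+1}$ to build a primitive, but $X_{n+1}$ lives in $\mathfrak{X}$, not in $K^R$; the statement is purely about the Koszul complex and must be proved there. The actual argument (Levin--Avramov) uses the perfect pairing $\m/\m^2 \times (0:\m^2)/\soc(R) \to \soc(R)$: choose $y_j \in (0:\m^2)$ with $x_iy_j = \delta_{ij}t$, and for any basis monomial $X_{j_1}\cdots X_{j_i}$ with $i \le n-1$ pick an index $j_0 \notin \{j_1,\ldots,j_i\}$. Since $y_{j_0}x_{j_k} = 0$ for $k \ge 1$, one gets directly $tX_{j_1}\cdots X_{j_i} = \partial(y_{j_0}X_{j_0}X_{j_1}\cdots X_{j_i}) \in (0:\m^2)\Bo_{i+1}(K^R)$. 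No higher variables enter.

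For (6) your plan is to find \emph{some} primitive for $vw$ ``by acyclicity in the relevant range'' and then ``project it into $z'\mathfrak{X}$'' using the derivations $\nu_j$. Neither step works as stated. The algebra $\mathfrak{X}$ is only distinguished, so $\I\Ho_j(\mathfrak{X}) = 0$ is guaranteed only for $j$ below the top degree of the adjoined variables, whereas $\deg(vw)$ can be arbitrarily large; you cannot appeal to acyclicity. And there is no projection onto $z'\mathfrak{X}$: the derivations $\nu_j$ of Lemma~\ref{ggc2} are $K^R$-linear and detect $\Gamma$-monomials in $X_{n+1},\ldots,X_{n+i}$, so they cannot see divisibility by the Koszul element $z' \in K^R_{n-1}$. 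The paper's argument uses neither acyclicity nor derivations. One writes $v = \sum_i x_i v_i$ (possible since $v \in \I\Zi(\mathfrak{X}) \subset \m\mathfrak{X}$), replaces $x_i = \partial(X_i)$, and obtains
\[
vw \;=\; \partial\Big(\textstyle\sum_i X_iv_i\Big)\,w \;+\; \textstyle\sum_i X_i\,\partial(v_i)\,w.
\]
The crucial point is the elementary Koszul fact $X_iz' \in K^R_n = R\,X_1\cdots X_n$: since $w \in z'\mathfrak{X}$, the error term $\sum_i X_i\partial(v_i)w$ lands in $X_1\cdots X_n\,R\langle X_{n+1},\ldots,X_{n+i}\rangle$, say $X_1\cdots X_n u$. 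This error is a cycle (both $vw$ and $\partial(v_1')w$ are), and expanding $\partial(X_1\cdots X_n u) = 0$ coefficientwise forces $u$ to have socle coefficients; then (2) supplies a primitive $v_2'z'$. The resulting $v' = (\sum X_iv_i)w + v_2'z'$ visibly lies in $z'\mathfrak{X}$ because $w$ does. This decomposition $v = \sum x_iv_i$ together with $X_iz' \in R\,X_1\cdots X_n$ is the missing idea in your sketch.
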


\begin{proof}
Let $\partial(X_i) = x_i$, $1 \leq i \leq n$. Then the maximal ideal $\m$ is minimally generated by $\{x_1, \ldots, x_n\}$. 

It is proved in \cite[Lemma 1.2]{levin1978factoring} that 
\[\soc(R)K^R_i \subseteq (0 : \m^2) {\Bo}_{i} (K^R) \subseteq \m {\Bo}_{i} (K^R) \ \text{for} \ i = 1, \ldots, n - 1.\]
Furthermore, in line 10 on page 80 of the proof of \cite[Theorem 2]{levin1978factoring}, it is shown that
\[tX_1 \ldots X_n R\langle X \rangle = z' \partial(X_{n+1}) R\langle X \rangle \subseteq z' \Bo(R\langle X \rangle).\] 
By a very similar argument, we obtain
\[tX_1 \ldots X_n \mathfrak{X} = z' \partial(X_{n+1}) \mathfrak{X} \subseteq z' \Bo(\mathfrak{X}).\] 
It follows from definition of $\mathfrak{Y}$ that $tX_1 \ldots X_n \mathfrak{X} = z' \Bo(\mathfrak{Y})$. Therefore, proof of statement $(1)$ follows.

It is proved in equation 2.3 on page no 79 of the proof of \cite[Theorem 2]{levin1978factoring} that 
\[t R\langle X \rangle \subseteq (0 : \m^2) \Bo(R\langle X \rangle) + tX_1 \ldots X_n R \langle X \rangle.\] 
The same reasoning applies here and we obtain 
\[t \mathfrak{X} \subseteq (0 : \m^2) \Bo(\mathfrak{X}) + tX_1 \ldots X_n \mathfrak{X}.\] 
Using statement (1), this becomes $t \mathfrak{X} \subseteq (0 : \m^2) \Bo(\mathfrak{X}) +  z' \Bo(\mathfrak{Y})$. The reverse inclusion is clear, so statement (2) follows.

Both statements (3) and (4) follow from statement  (2). The argument is implicit in \cite[Theorem 2]{levin1978factoring}. We skip the details.

Now we prove statement (5). We may assume that $\deg(v) \geq 1$, since the result is obvious when $v \in \m$. Let $v = \sum_{i = 1}^nx_iv_i$, $v_i \in \mathfrak{X}$. Then we have 
\[v w = \sum_{i = 1}^n x_iv_iw  = \sum_{i = 1}^n[ \partial(X_iv_i)w  + X_i \partial(v_i) w ] 
 =\partial (v'_1)w  + \sum_{i = 1}^n X_i \partial(v_i) w \ \text{where} \ v'_1 = \sum_{i = 1}^n X_i v_i. 
\]
Since $z' \in K_{n -1}(R)$, we get  
\begin{equation}\label{soclexi}
X_i z'  \in X_1 \ldots X_n \mathfrak{X} \ \text{for} \ 1 \leq i \leq n. 
\end{equation}
By the hypothesis,  $w \in z' \mathfrak{X}$ and $X_1, \ldots, X_n$ are exterior variables, so
\begin{displaymath}\sum_{i = 1}^n X_i \partial(v_i) w  = X_1\ldots X_n u \ \text{for  some} \ u \in R \langle X_{n+1}, \ldots, X_{n+i} \rangle.\end{displaymath} 
It follows that $vw  = \partial(v_1')w  + X_1\ldots X_n u$. Since  $v, w $ are cycles, the element $X_1\ldots X_n u$ is also a cycle. Therefore, 
\[0 = \partial(X_1\ldots X_n u) = \sum_{i = 1}^n (-1)^{i - 1}x_i X_1 \cdots X_{i-1}X_{i+1} \cdots X_nu + (-1)^n X_1\ldots X_n\partial(u).\]

Note that $u \in R \langle X_{n+1}, \ldots, X_{n+i} \rangle$. Comparing coefficients of $X_1 \cdots X_{i-1}X_{i+1} \cdots X_n$ for $i = 1, \ldots, n$ on both sides, we obtain $u \in \soc(R) R \langle X_{n+1}, \ldots, X_{n+i} \rangle$. By statement $(1)$, we get
\[X_1 \ldots X_n u \in \soc(R) X_1\ldots X_n \mathfrak{X} \subseteq z' \Bo(\mathfrak{X}).\] Let $X_1 \ldots X_n u = \partial (v_2') z'$ for some $v_2' \in \mathfrak{X}$. Then we have 
\[vw = \partial(v_1')w + \sum_{i = 1}^n X_i \partial(v_i) w =  \partial(v_1')w  + X_1 \ldots X_n u = \partial(v_1')w + \partial(v_2')z' = \partial(v')\] where $v' = v'_1 w + v'_2 z'$. Clearly, $v' \in z' \mathfrak{X}$ and therefore statement (5) follows. 

Finally, we prove statement (6). We may assume that $\deg(\bar{v}) \geq 1$ to exclude the obvious case. Since $\bar{w} \in z' \bar{\mathfrak{X}} \subset \m \bar{\mathfrak{X}}$, $\m (0 : \m^2) \subseteq \soc(R)$ and $z'^2 = 0$,  we observe from statement (3) that $ \Zi(\bar{\mathfrak{X}}) \bar{w} = \bar{R} \otimes_R  \Zi(\mathfrak{X})\bar{w}$. Therefore, there exists $v_0 \in \Zi_{\geq 1}(\mathfrak{X})$ such that $\bar{v} \bar{w} = \bar{v}_0\bar{w}$.

Since $\bar{w} \in \Zi(z' \bar{\mathfrak{X}})$, we may choose a lift $w$ of $\bar{w}$ such that  $w = z'w' $ for some $w' \in \mathfrak{X}$. We observe that $t \mathfrak{X} \subseteq \Zi(\mathfrak{X})$. Therefore, by  statement (3), we obtain 
\begin{equation}\label{sbreq1}
z' w' = w = w_1 + z' w_2 + w_3 \ \text{for some} \  w_1 \in (0 : \m^2)\mathfrak{X}, w_2 \in \mathfrak{Y}\ \text{and} \ w_3 \in \Zi(\mathfrak{X}).
\end{equation}
It follows that $v_0w = v_0w_1 +  v_0z' w_2 +  v_0w_3.$

Since $v_0 \in \Zi_{\geq 1}(\mathfrak{X}) \subseteq \m \mathfrak{X}$, we deduce
\begin{equation}\label{bc1}
v_0 w_1 \in \soc(R)\mathfrak{X}.
\end{equation}
By statement (5), $v_0z' = \partial(v_1z')$ for some $v_1 \in \mathfrak{X}$. Therefore,
\[v_0 z' w_2 = \partial( v_1 z')w_2 = \partial( v_1 z' w_2) + (-1)^{n + \deg(v_1)} v_1 z' \partial(w_2).\]
Now, since $w_2 \in \mathfrak{Y}$, we have $z' \partial(w_2) \in \soc(R)X_1\ldots X_n \mathfrak{X}$ . This implies  
\begin{equation}\label{bc2}
v_0 z' w_2 = \partial( v_1 z' w_2) + w_4 \ \text{ where} \ w_4 = (-1)^{n + \deg(v_1)} v_1 z' \partial(w_2) \in \soc(R) \mathfrak{X}.
\end{equation}

Let $v_0 = \sum_{i=1}^n x_ia_i$ for $a_i \in \mathfrak{X}$, then we have
\[v_0 w_3 = \sum_{i=1}^n x_i a_iw_3 = \sum_{i=1}^n[\partial(X_ia_i)w_3 + X_i\partial(a_i)w_3] = \partial(v_2)w_3 + \sum_{i=1}^nX_i\partial(a_i)w_3 \ \text{where} \ v_2 = \sum_{i=1}^nX_ia_i.\]
 From equation \eqref{sbreq1} it follows that 
\[w_3 =  z'(w' - w_2) - w_1.\] 
By \eqref{soclexi}, we have $X_i z' \in X_1 \ldots X_n \mathfrak{X}$ and also $w_1 \in (0 : \m^2)\mathfrak{X}$. Therefore, 
 \[X_iw_3  = X_iz'(w' - w_2) - X_iw_1 \in  X_1 \ldots X_n \mathfrak{X} + (0 : \m^2)\mathfrak{X} \ \text{for} \ 1 \leq i \leq n.\]
Since $\partial(a_i) \in \m \mathfrak{X}$ and $X_1, \ldots, X_n$ are exterior variables, we obtain 
\[\sum_{i=1}^nX_i\partial(a_i)w_3 = X_1\ldots X_nu + w_5\] for some $u \in R\langle X_{n+1}, \ldots, X_{n+i} \rangle$ and $w_5 \in \soc(R)\mathfrak{X}$. 
It follows that
$v_0 w_3=  \partial(v_2)w_3 + X_1\ldots X_nu + w_5.$
Since $v_0, w_3, w_5$ are cycles, so is $X_1\ldots X_nu$. By an argument as seen before, we conclude that $u \in  \soc(R) R\langle X_{n+1}, \ldots, X_{n+i} \rangle$ and therefore  $X_1\ldots X_nu = \partial(v_3)z'$ for some $v_3 \in \mathfrak{Y}$ by statement (1). Consequently, we have  
 \begin{equation}\label{bc3}
v_0w_3 = \partial(v_2 w_3 + v_3z') + w_5.
\end{equation}

From \eqref{sbreq1}, \eqref{bc1}, \eqref{bc2}, and \eqref{bc3}, we obtain
\begin{align*}
v_0w &= v_0(w_1 + z' w_2 + w_3) = v_0w_1 + \partial( v_1 z' w_2) + w_4 + \partial(v_2 w_3 + v_3z') + w_5\\
&= (v_0w_1 + w_4 + w_5) + \partial(v_1z'w_2 + v_2 w_3 + v_3z').
\end{align*}
We have $v_0w_1 + w_4 + w_5 \in \soc(R)\mathfrak{X}$. Define 
$$v' = v_1z'w_2 + v_2 w_3 + v_3z' = v_1z'w_2 + v_2 [z'(w' - w_2) - w_1] + v_3z' 
= [v_1z'w_2 + v_2 z'(w' - w_2) + v_3z'] - v_2w_1.$$ 
Since $w_1 \in (0 : \m^2) \mathfrak{X}$, we have $v ' \in z' \mathfrak{X} + (0 : \m^2)\mathfrak{X}$.
Therefore, going modulo the socle, we deduce $\bar{v}\bar{w} = \overline{v_0w} = \partial(\bar{v'})$, where $v' \in z' \bar{\mathfrak{X}} + (0 : \m^2)\bar{\mathfrak{X}}$. Hence,  statement (6) follows.
\end{proof}

The following lemma serves as a key ingredient in the subsequent proposition, which constructs the acyclic closure of $R/\soc(R)$ for an Artinian Gorenstein local ring $R$.

\begin{lemma}\label{IL}
Let $(R, \m, k)$ be a local ring and $(A, \partial)$ be an augmented DG$\Gamma$ algebra over $R$ with augmentation map $\epsilon : A \twoheadrightarrow k$. Assume that each $\Ho_i(A)$ is a finitely generated $R$-module.
Let $B = A \langle X_1, \ldots, X_n \rangle$ be a partial acyclic closure and $J \subseteq \m A$ be a DG$\Gamma$ ideal of $A$ such that the following hold: 
\begin{enumerate}
\item 
$\partial(X_i) \in J B$ for all $i = 1, \ldots, n$,

\item
$J^2  = 0$,

\item 
For any $v \in \I\Zi(A)$ and $w \in \Zi(J)$, the product $v w$ is a boundary in $J$.
\end{enumerate}
Suppose $\deg X_1 \leq \deg X_2 \leq \ldots \leq \deg X_n = d$. 
Then the map $\Ho_i( A ) \rightarrow \Ho_i(B)$ is injective for all $i \geq d$ and the map $\Ho_i( \m A ) \rightarrow \Ho_i(\m B)$ is injective for all $i \geq 0$.
\end{lemma}

\begin{proof}
We first show that $\Ho_i(A) \to \Ho_i(B)$ is injective for all $i \geq d$. 
Fix $i \geq d$ and let $\alpha \in \Zi_i(A)$ be a boundary in $B$. Then $\alpha = \partial(\beta)$ for some $\beta \in B_{i+1}$.
Let $\init(\beta) = M$, where $M = X_{1}^{(c_1)} \cdots X_{r}^{(c_r)}$, with $r \leq n$, $c_j \geq 0$ for $j = 1, \ldots, r-1$ and $c_r \geq 1$.  By Lemma \ref{ggc2}, there exist $A$-linear chain $\Gamma$-derivations $\nu_j$, $1 \leq j \leq n$, such that $\nu_j(X_j) = 1$ and $\nu_j(X_i) = 0$ for $i < j$. The same lemma further gives $\nu_M(\beta) = \pm f$, where $f \in A$ is the leading coefficient of $\beta$, i.e., $f$ is the coefficient of $M$ in $\beta$. We  have $\nu_M(\alpha) = 0$, since $\alpha$ is free of $\Gamma$-variables $X_j$s. It follows that 
\[
\partial(f) = \pm \partial(\nu_M(\beta)) = \pm \nu_M(\partial(\beta)) = \pm \nu_M(\alpha) = 0.
\]
Therefore, $f \in \Zi(A)$.

Applying the same reasoning as in the case of $\nu_M(\beta)$, we conclude that $\nu_r(\beta)$ is a cycle in $B$. It follows that $\nu_r(\beta) \in \Zi(B) \cap B_{i+1- \deg X_r} \subseteq \Zi_{\geq 1}(B) \subseteq \Zi_{\geq 1}(A^*)$, where $A^*$ denotes the acyclic closure of $A$. We know  that $\Zi_{\geq 1}(A^*) \subseteq \Bo(A^*) \subseteq (\ker \epsilon) A^*$ \cite[Theorem 1.6.2]{gulliksen1969homology}, \cite[Theorem 6.3.4]{avramov1998infinite}. Therefore, $\nu_r(\beta) \in (\ker \epsilon) A^* \cap A^*_{\geq 1}$. 
The initial $\Gamma$-monomial of $\nu_r(\beta)$ is $\nu_r(M)$ and its leading coefficient is $\pm f$. Therefore, if $\deg(f) = 0$, we must have $f \in (\ker \epsilon)_0 = \m A_0$. Altogether, this discussion allows us to conclude that  $f \in \I \Zi(A)$.

It is given that $\partial(X_j) \in JB$ for $j = 1, \ldots, n$. Let $\partial(M) = \sum_{j = 1}^m g_jM_j$, where $g_j \in J$ for $j = 1, \ldots, m$ and $M_j$s are $\Gamma$-monomials in $X_1, \ldots X_r$ of total degree strictly less than $d$. We have $g_l \partial(X_j) = 0$ for $1 \leq j, l \leq m$ since $J^2 = 0$, so 
\[\partial^2(M) = \sum_{j = 1}^m \partial(g_j) M_j = 0.\] 
It follows that each $g_j \in \Zi(A) \cap J = \Zi(J)$, so by our hypothesis we obtain $f g_j =  \partial(h_j)$ for some $h_j \in J$, for $j = 1, \ldots, m$. Now
\[\partial(fM) = (-1)^{\deg{f}} \sum_{j = 1}^m f g_jM_j = (-1)^{\deg{f}} \sum_{j = 1}^m \partial(h_j) M_j = \partial(\delta)\] where $\delta = (-1)^{\deg{f}} \sum_{j = 1}^m h_jM_j \in JB \subseteq \m B$. Now we observe:
\[ \alpha = \partial(\beta) = \partial(\beta - f M) + \partial(fM) = \partial(\beta - f M) + \partial(\delta) = \partial(\beta - f M + \delta)\]
and moreover $\init(\beta - f M + \delta) < M =  \init(\beta)$. Repeating this argument iteratively, we obtain $\beta_0$ free of $X_1, \ldots, X_n$, equivalently $\beta_0 \in A$ such that $\alpha = \partial (\beta_0)$. Therefore, the homology class of $\alpha$ in $\Ho_i(A) $ is zero. Consequently, the injectivity of the first map is established. 

One can argue in the same way to prove that the second map is injective.  Here the  proof is slightly shorter as we do not need to carry out the extra work of showing that the leading coefficient of $\beta$, i.e.,  $f$ is in $\m A_0$ when $\deg(f) = 0$. 
\end{proof}

\begin{proposition}\label{ggc3}
Let $(R, \m, k)$ be an Artinian Gorenstein local ring with $\edim(R) = m \geq 2$ and $(R\langle X \rangle, \partial)$ be the acyclic closure of $R$. Set $\bar{R} = R/ \soc(R)$. Then $\bar{R}$ admits an acyclic closure of the form $\bar{R} \langle X, Y, Z \rangle$ with the following properties.
\begin{enumerate}
\item $Y = \{Y_i : i \geq 1\}$, $Z=\{Z_j : j \geq 1\}$, $\partial(Y_i) \in (0 : \m^2) \bar{R} \langle X, Y, Z \rangle$ and $\partial(Z_j) \in z' \bar{R} \langle X, Y, Z \rangle$ for some $z' \in \Zi_{m-1}(K^R) \setminus \Bo_{m-1}(K^R)$.
\item The  $R$-linear map $R\langle X \rangle \rightarrow \bar{R}\langle X, Y, Z \rangle$ sending $X_i$ to $X_i$ induces the following injective maps on homology: 
\begin{align*}
&\Ho(R\langle X_i : \deg(X_i) \leq n \rangle) \hookrightarrow \Ho(\bar{R}\langle X_i, Y_j, Z_k : \deg(X_i), \deg(Y_j), \deg(Z_k) \leq n \rangle) \ \text{and} \ \\
&\Ho(\m R\langle X_i : \deg(X_i) \leq n \rangle) \hookrightarrow \Ho(\m \bar{R}\langle X_i, Y_j, Z_k : \deg(X_i), \deg(Y_j), \deg(Z_k) \leq n \rangle) \ \text{for all $n \geq 2$.}
\end{align*}

\end{enumerate}
\end{proposition}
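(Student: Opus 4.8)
The plan is to construct the acyclic closure of $\bar R = R/\soc(R)$ by the standard recursive adjunction of $\Gamma$-variables, arranged so that, apart from the variables $X$ already present in $\bar R\otimes_R R\langle X\rangle$, only two further families $Y=\{Y_i\}$ and $Z=\{Z_j\}$ are ever needed. To set up, write $\soc(R)=(s)$, pick $z\in\Zi_1(K^R)$ with nonzero homology class (such $z$ exists since $R$ is not regular), and, using that $\Ho(K^R)$ is a Poincar\'e algebra \cite{av5}, choose $z'\in\Zi_{m-1}(K^R)\setminus\Bo_{m-1}(K^R)$ with $zz'=sX_1\cdots X_m$, exactly as in Lemma~\ref{sbr1}; fix as well the degree-$2$ variable $X_{m+1}$ of $R\langle X\rangle$ with $\partial(X_{m+1})=z$. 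Note $z'\in\m K^R$, since positive-degree cycles of $K^R$ lie in $\m K^R$.

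The construction proceeds by induction on the degree $n$. Suppose we have built a semi-free DG$\Gamma$ extension $W^{(n)}=\bar R\langle X_i,Y_j,Z_k:\deg\le n\rangle$ of $K^{\bar R}=\bar R\otimes_R K^R$ whose $X$-variables are precisely those of $\bar R\otimes_R\big(R\langle X_i:\deg\le n\rangle\big)$, with $\partial(Y_j)\in(0:\m^2)\,W^{(n)}$, $\partial(Z_k)\in z'\,W^{(n)}$, and with $W^{(n)}$ acyclic in degrees $<n$. To pass to degree $n+1$ we must adjoin variables killing $\I\Ho_n(W^{(n)})$, and the technical heart of the proof is the claim
\[
\Zi_n(W^{(n)})\ \subseteq\ \iota\big(\Zi_n(R\langle X_i:\deg\le n\rangle)\big)\ +\ (0:\m^2)\,W^{(n)}\ +\ z'\,W^{(n)}\ +\ \Bo_n(W^{(n)}),
\]
where $\iota$ denotes the canonical map into $W^{(n)}$. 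Granting this, the first summand becomes a boundary once the degree-$(n+1)$ variables of $R\langle X\rangle$ are adjoined (as $R\langle X\rangle$ is acyclic), the second is killed by new variables $Y_j$ of degree $n+1$ with $\partial(Y_j)$ in $(0:\m^2)$, and the third by new variables $Z_k$ of degree $n+1$ with $\partial(Z_k)\in z'W^{(n)}$; this reproduces all the inductive hypotheses, and the direct limit is the desired acyclic closure $\bar R\langle X,Y,Z\rangle$ with the shape asserted in~(1).

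To prove the displayed inclusion one argues as in Lemma~\ref{sbr1}: expand a cycle of $W^{(n)}$ in $\Gamma$-monomials in the $Y,Z$-variables, apply the $K^R$-linear derivations $\nu_M$ of Lemma~\ref{ggc2} to peel off the leading $\Gamma$-monomial, and descend to the case with no $Y,Z$-variables, which is precisely Lemma~\ref{sbr1}(4) for the relevant distinguished algebra over $R$. This descent is the main obstacle, because it produces cross terms: a previously chosen cycle $v$ times some $\partial(Y_j)\in(0:\m^2)W^{(n)}$ lands in $\soc(R)W^{(n)}$, hence in $(0:\m^2)\Bo(W^{(n)})+z'\Bo(W^{(n)})$ by Lemma~\ref{sbr1}(1),(3), while $v$ times some $\partial(Z_k)\in z'W^{(n)}$ is the boundary of an element of $z'W^{(n)}+(0:\m^2)W^{(n)}$ by Lemma~\ref{sbr1}(6),(7). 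These two identities are exactly what keeps the three-term decomposition stable under the adjunction, so the induction closes.

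Finally, for part~(2): since $(0:\m^2)\subseteq\m$ and $z'\in\m K^R$, all the $\partial(Y_j)$ and $\partial(Z_k)$ lie in $\m\,\bar R\langle X,Y,Z\rangle$, so the $X$-variables span a DG$\Gamma$ subalgebra which splits off degreewise. Then a cycle of $\m R\langle X_i:\deg\le n\rangle$ (resp.\ of $R\langle X_i:\deg\le n\rangle$) whose image is a boundary in the larger algebra can be pulled back by repeatedly applying the derivations $\nu_M$ annihilating the $Y,Z$-variables --- the same bookkeeping as in the induction in the proof of Theorem~\ref{sbr2}(1) --- which gives both injectivity statements.
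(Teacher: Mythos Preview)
Your approach is essentially the one the paper takes: the same choice of $z,z'$ via the Poincar\'e duality of $\Ho(K^R)$, the same inductive adjunction with the three-term cycle decomposition (this is the paper's Claim~1), and the same use of Lemma~\ref{sbr1}(4),(7) together with the derivations of Lemma~\ref{ggc2} to handle the cross terms and the injectivity of part~(2) (the paper's Claims~2 and~3).

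There is, however, one organizational point that is not merely cosmetic. You prove part~(2) only \emph{after} the construction is complete, but in fact the injectivity of $\Ho(U^n)\to\Ho(W^{(n)})$ is needed \emph{during} the inductive step. When you pass from $W^{(n)}$ to $W^{(n+1)}$ you adjoin, among others, the degree-$(n+1)$ variables $X_i$ from $R\langle X\rangle$; for the result to be the acyclic closure (i.e.\ a distinguished DG$\Gamma$ algebra), the homology classes of the $\overline{\partial(X_i)}$ must be part of a \emph{minimal} generating set of $\Ho_n(W^{(n)})$, and this is exactly the statement that $\Ho_n(U^n)\hookrightarrow\Ho_n(W^{(n)})$. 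Without it you only get some acyclic semi-free extension containing possibly redundant $X$-variables, not the acyclic closure, and then the derivations of Lemma~\ref{ggc2} (which require distinguishedness) are not available either. The paper handles this by carrying the injectivity as property~(5) of its inductive hypotheses and re-proving it at every stage (Claim~3). Your sketch of~(2) is the right argument; it just has to be folded into the induction rather than appended at the end. A minor related imprecision: in your displayed inclusion the summand $z'W^{(n)}$ should be $z'W^{(n)}\cap\Zi_n(W^{(n)})$, since you need cycles to attach the new $Z_k$'s.
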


\begin{proof}
Let  
\[\soc(R) = (s),  \bar{R} = R/ (s), U^n = R \langle X_i : \deg(X_i) \leq n \rangle \  \text{and} \ \bar{U}^n = \bar{R} \otimes_R U^n \ \text{for} \ n \geq 1.\] 

The DG$\Gamma$ algebras $U^n, \bar{U}^n$ admit natural augmentation maps. By construction,  $\I \Ho_i(U^n) = 0$ for $0 \leq i  \leq n - 1$ and the homology classes of the cycles $\partial(X_i)$ with $\deg(X_i) = n +1$ minimally generate $\I \Ho_n(U^n)$. Now $\deg(X_i) = 1$ for $1 \leq i \leq m$ where $m = \dim {\m}/{\m^2} = \edim(R)$. Let $\partial(X_i) = x_i$ for $1 \leq i  \leq m$. Then $\m$ is minimally generated by $\{x_i :1 \leq i \leq m\}$. Clearly, $U^1$ is the Koszul complex $K^R$ on $x_1, \ldots, x_m$. Let $z \in \Zi_1(K^R) \setminus \Bo_1(K^R)$ be such that  $\partial(X_{m+1}) = z$. We choose $z' \in \Zi_{m-1}(K^R)$ such that $zz' = sX_1\cdots X_m$ \cite{avramov1971homology}.

We construct inductively a sequence $V^n$ of DG algebras over $\bar{R}$ with the following properties.
\begin{enumerate}
\item $V^1 = K^{\bar{R}}$, the Koszul complex of $\bar{R}$.

\item
$\I \Ho_i(V^n) = 0$ for $0 \leq i \leq n -1$.

\item   $V^n = \bar{U}^{n} \langle Y_{p_{n-1} + 1}, \ldots, Y_{p_{n}},  Z_{q_{n-1} + 1}, \ldots, Z_{q_{n}} \rangle$, 
$n \geq 2$ for some nondecreasing sequences $\{p_i\}$, $\{q_i\}$ of integers. The homology classes of $\partial(X_i)$ with $\deg(X_i) = n$, together with $\partial(Y_j)$ and $\partial(Z_k)$ for $p_{n-1}+1 \leq j \leq p_n$ and $q_{n-1}+1 \leq k \leq q_n$, minimally generate $\Ho_{n-1}(V^{n-1})$.
\item
$\partial(Y_j) \in (0 : \m^2) V^{n-1}$ and $\partial(Z_k) \in z' V^{n-1}$ for $p_{n-1} + 1 \leq j \leq p_n$ and $q_{n-1} + 1 \leq k \leq q_n$.

\item
The composition $U^n \twoheadrightarrow \bar{U}^n \hookrightarrow V^n$ induces injective maps 
\[\Ho(U^n) \hookrightarrow \Ho(V^n) \ \text{and} \  \Ho(\m U^n) \hookrightarrow \Ho(\m V^n) \ \text{for} \ n \geq 2. \]
\end{enumerate}

Note that $\bar{U}^1$ is the Koszul complex of $\bar{R}$.  We set 
\[V^1 = \bar{U}^1 \  \text{with its natural augmentation.}\]
 By \cite[Theorem 1]{levin1978factoring}, if $\bar{H} = \frac{\Ho(K^R)}{\Ho_m(K^R)}$, $\bar{K} = \frac{K^R \otimes_R k}{K_m^R \otimes_R k}[-1]$, then $\Ho(K^{\bar{R}}) = \bar{H} \lJoin \bar{K}$. It follows that $\Ho_1(V^1) = \Ho_1(U^1) \oplus k$. The pairing 
\[\frac{\m}{\m^2} \times \frac{(0 : \m^2)}{(0 : \m)} \rightarrow \soc(R), (\bar{x}, \bar{y}) \mapsto xy\] is non-degenerate. 
We choose $y_i \in (0 : \m^2)$, $1 \leq i \leq m$ such that 
\[x_iy_j = 
\begin{cases}
0 \ \text{for $i \not= j$}\\
s \ \text{for $i = j$.}
\end{cases}
\]
The homology classes of $\partial(X_i)$ with $\deg(X_i) = 2$, together with $\bar{y}_1X_1$, minimally generate $\Ho_1(V^1)$. Define  
\[V^2 = \bar{U}^2 \langle Y_1 : \partial(Y_1) = \bar{y}_1X_1 \rangle\] with its natural augmentation. 
Here $p_0 = p_1 = 0$, $p_2 = 1$, $q_0 = q_1 = q_2 = 0$. 

We observe that $\Ho_0(\bar{U}^2) = k$ and  $\displaystyle {\Ho}_1(\bar{U}^2) = \frac{\Ho_1(\bar{U}^1)}{ \langle cls(\partial(X_i)) : \deg(X_i) = 2 \rangle}$, which  is minimally generated by the homology  class of $\bar{y}_1X_1$.  It follows that $V^2$ is a partial acyclic closure of $\bar{U}^2$. Setting $A = \bar{U}^2$, $B = V^2$ and $J = \bar{y}_1X_1 A$, we see that  $J$ is a DG$\Gamma$ ideal of $A$ such that $J^2 = 0 $. Moreover, the product of any cycle in $\I \Zi(A)$ with a cycle in $\Zi(J)$ is zero since $\m J = 0$. Thus, all the hypotheses of Lemma \ref{IL} are satisfied. Therefore, it follows that  
\[{\Ho}_i(\bar{U}^2) \hookrightarrow {\Ho}_i(V^2) \ \text{and} \  {\Ho}_j(\m \bar{U}^2) \hookrightarrow {\Ho}_j(\m V^2)\]
are injective maps for all $i \geq 2$ and $j \geq 0$. On the other hand, $(4)$ of Lemma \ref{sbr1} shows that the maps 
\[{\Ho}_i(U^2) \hookrightarrow {\Ho}_i(\bar{U}^2) \ \text{and} \  {\Ho}_i(\m U^2) \hookrightarrow {\Ho}_i(\m \bar{U}^2)\] 
are injective for all $i \geq 0$. We observe that $\Ho_0(U^2) = k$ and $\Ho_1(U^2) = 0$. Therefore,
by taking compositions, we conclude that 
\[\Ho(U^2) \hookrightarrow \Ho(V^2) \ \text{and} \  \Ho(\m U^2) \hookrightarrow \Ho(\m V^2)\]
are injective maps. 

Now we assume that $V^n, n \geq 2$ is constructed with aforesaid properties. We proceed to construct $V^{n+1}$. Since $V^n$ is a partial acyclic closure of $\bar{R}$, we have $\I \Zi(V^n) \subseteq \m V^n$. \\

\noindent
{\bf Claim} : { ${\Zi}_n(V^n) = {\Bo}_n(V^n) +  \bar{R} \otimes_R \Zi_n(U^n)  + (0 : \m^2) V_n^n + z'V^n \cap {\Zi}_n(V^n), n \geq 2.$} 

Set \[S = {\Bo}_n(V^n) +  \bar{R} \otimes_R {\Zi}_n(U^n)  + (0 : \m^2) V_n^n + z'V^n \cap {\Zi}_n(V^n).\] Clearly, $S \subseteq \Zi_n(V^n)$.
To prove the reverse inclusion, we choose $\alpha \in \Zi_n(V^n)$. 
Then $\alpha$ is a finite linear combination of $\Gamma$-monomials in $Y_j$ and $Z_k$ with coefficients in $\bar{U}^n$. Let the total degree of $\alpha$ in $Y_i$ and $Z_k$ be $d$. We prove $\alpha \in  S$ by induction on $d$. If $d = 0$, then $\alpha$ has no terms involving $Y_j$ and $Z_k$, so $\alpha \in \Zi_n(\bar{U}^n)$. By (3) of Lemma \ref{sbr1}, we obtain  $\alpha \in \bar{R} \otimes_R \Zi_n(U^n) + (0 : \m^2) \bar{U}_n^n + z' \bar{U}^n \cap  \Zi_n(\bar{U}^n)$. Since $\bar{U}^n \subseteq V^n$, we have $\alpha \in S$.

Now we assume that $d > 0$. We choose a $\Gamma$-monomial term in $Y_j, Z_k$ of the form $f M$ for $f \in \bar{U}^n$, $M= Y_{l_1}^{(c_1)}\ldots Y_{l_s}^{(c_s)}Z_{m_1}^{(d_1)} \ldots Z_{m_t}^{(d_t)}$  in the expression of $\alpha$ such that $\deg(M) = d$, $\deg(f) = e$. Clearly $n = d + e$ and so $e < n$. 
Since $\alpha$ is a cycle and $M$ is a $\Gamma$-monomial of highest total degree in $Y_j, Z_k$, $f$ is a cycle in $\bar{U}^n$. Therefore,  by (3) of Lemma \ref{sbr1}, we have  
\[f \in \bar{R}\otimes_R {\Zi}_e(U^n) + (0 : \m^2)\bar{U}_e^n + {\Zi}_e(\bar{U}^n) \cap z' \bar{U}^n.\] 

Since $V^n$ is a partial acyclic closure of $\bar{R}$, we have $\alpha \in \Zi_n(V^n) \subseteq \m V^n$, $n \geq 2$. Therefore, if $e = 0$, we get $f \in \m \bar{R} = \Bo_0(\bar{U^n})$. In contrast, if $1 \leq e < n$, we get $\Zi_e(U^n) = \Bo_e(U^n)$.   Therefore, we may  write 
 \[f = \partial(g) + f' + f'' \ \text{for} \  g \in \bar{U}^n_{e+1}, f' \in  (0 : \m^2)\bar{U}_e^n \ \text{and} \ f'' \in {\Zi}_e(\bar{U}^n) \cap z' \bar{U}^n.\] 
Hence,  \[fM = \partial(gM) + f'M + f''M + (-1)^eg\partial(M).\] 

Note that $f'M \in (0 : \m^2)V_n^n$. By property (4) of $V^n$, we obtain $z' \partial(M) = 0$, which implies $f''M \in z'V^n \cap \Zi_n(V^n)$. We observe that  $g \partial(M)$ has total degree in $Y_j$ and $Z_k$ strictly less than $d$ and all other terms in the expression of $fM$ are in $S$. 
 
 If we write each $\Gamma$-monomial term of total degree $d$ in $Y_j$, $Z_k$ appearing in the expression of $\alpha$ in this manner, we have $\alpha = \alpha_1 + \alpha_2$, where $\alpha_1 \in S$, $\alpha_2 \in \Zi_n(V^n)$ and the total degree of $\alpha_2$ in $Y_j$, $Z_k$ is strictly less than $d$. By the induction hypothesis, $\alpha_2 \in S$, so $\alpha \in S$ and our claim follows.

The homology classes of cycles $\partial(X_i)$ with $\deg(X_i) = n+1$ minimally generate $\Ho_n(U^n)$. Because of our claim and the fact that $\Ho(U^n) \hookrightarrow \Ho(V^n)$ is injective,
we can adjoin $\Gamma$-variables $X_i$ for $\deg(X_i) = n+1$ and 
$Y_j$, $Z_k$ for $p_n + 1 \leq j \leq p_{n+1}$, $q_n + 1 \leq k \leq q_{n+1}$ to $V^n$ such that the homology classes of $\overline{\partial(X_i)}$, $\partial(Y_j)$, $\partial(Z_k)$ minimally generate $\Ho_n(V^n)$ and $\partial(Y_j) \in (0 : \m^2)V^n$, $\partial(Z_k) \in z'V^n$. We define 
\begin{align*}
V^{n+1} &= V^n \langle X_i, Y_j, Z_k : \deg(X_i) = n+1, p_n + 1 \leq j \leq p_{n+1}, q_n + 1 \leq k \leq q_{n+1} \rangle \\
&= \bar{U}^{n+1} \langle Y_j, Z_k : 1 \leq j \leq p_{n+1}, 1 \leq k \leq q_{n+1} \rangle.
\end{align*}
with the natural augmentation. Clearly, $\I \Ho_{i}(V^{n+1}) = 0$ for $0 \leq i \leq n$.

We rename $Y_j$, $Z_k$ as $W_l$, $1 \leq l \leq p_{n+1} + q_{n+1}$ such that $\deg(W_l) \leq \deg(W_{l'})$ for $l < l'$. This is possible because for any positive integer $d$, there are only finitely many $Y_j$, $Z_k$ of degree $d$. 
Now $V^{n+1} = \bar{U}^{n+1}\langle W_l : 1 \leq l \leq p_{n+1} + q_{n+1}\rangle$ is a DG algebra over $\bar{U}^{n+1}$. By  construction,
the homology classes of $\partial(W_l)$, $\deg(W_l) = d$ minimally generate 
\[\I {\Ho}_{d-1}(\bar{U}^{n+1}\langle W_l :  \deg(W_l) \leq d - 1 \rangle) = \I {\Ho}_{d-1}(V^{d-1})/ \langle cls(\partial(X_i)) : \deg(X_i) = d \rangle\]
 for $1 \leq d \leq n + 1$.  It follows that $V^{n+1}$ is a partial acyclic closure of $\bar{U}^{n+1}$. Let $A = \bar{U}^{n+1}$, $B = V^{n+1}$ and $J = (0 : \m^2) A + z' A$. Then $J$ is a DG$\Gamma$ ideal of $A$ such that $J^2 =  0$. We observe that 
 \[\Zi(J) = (0 : \m^2) A + \Zi(z' A).\]
 By (6) of Lemma \ref{sbr1}, the product of any cycle in $\I \Zi(A)$ with a cycle in $\Zi(J)$ is a boundary in $J$. Applying Lemma \ref{IL}, we conclude that 
 \[{\Ho}_i(\bar{U}^{n + 1}) \hookrightarrow {\Ho}_i(V^{n + 1}) \ \text{and} \  {\Ho}_j(\m \bar{U}^{n + 1}) \hookrightarrow {\Ho}_j(\m V^{n + 1})\]
are injective maps for all $i \geq n + 1$ and $j \geq 0$.  On the other hand, $(4)$ of Lemma \ref{sbr1} implies that the maps 
\[{\Ho}_i(U^{n + 1}) \hookrightarrow {\Ho}_i(\bar{U}^{n + 1}) \ \text{and} \  {\Ho}_i(\m U^{n + 1}) \hookrightarrow {\Ho}_i(\m \bar{U}^{n + 1})\] 
are injective for all $i \geq 0$. We observe that $\Ho_0(U^{n + 1}) = k$ and $\Ho_i(U^{n + 1}) = 0$ for $1 \leq i \leq n$. Thus, taking compositions, we conclude that 
\[\Ho(U^{n + 1}) \hookrightarrow \Ho(V^{n + 1}) \ \text{and} \  \Ho(\m U^{n + 1}) \hookrightarrow \Ho(\m V ^{n + 1})\]
are injective maps. 

Therefore, our induction step of construction of $V^{n+1}$ is complete. We define the acyclic closure of $\bar{R}$ as $\bar{R} \langle X, Y, Z \rangle = \projlim V^{n}$. The DG$\Gamma$ algebras $\{V^n\}$ define a filtration of $\bar{R} \langle X, Y, Z \rangle$. The properties (1) and (2) stated in the theorem are satisfied by $\bar{R} \langle X, Y, Z \rangle$ by  construction.
\end{proof}

\section{Main results}\label{sec:ben2}
We now have the necessary tools to prove our main results. 
We recall the definition of large homomorphisms  which will be useful in the sequel. 
A surjective homomorphism $f : R \twoheadrightarrow S$ of local rings is called large if the induced map  $f^* :  \pi(S) \rightarrow \pi(R)$ between homotopy Lie algebras is injective. The following result is crucial for proving Theorem~\ref{ggfl4}.

\begin{lemma}\label{largeGGL}
Let $\phi : R \rightarrow S$ be a large homomorphism of local rings. If $R$ is a generalized Golod ring, then $S$ is also a generalized Golod ring.
\end{lemma}

\begin{proof}
Since $\phi : R \rightarrow S$ is a large homomorphism, the induced map  $\phi^* : \pi(S) \rightarrow \pi(R)$ between homotopy Lie algebras is injective. It follows that  $\pi^{>n}(S)$ is a subalgebra of $\pi^{>n}(R)$ which is free because $R$ is a generalized Golod ring of level $n$. A graded sub-Lie algebra of a free graded Lie algebra on a positively graded vector space is free \cite[Proposition A.1.10]{lemaire2006algebres}. Therefore, $\pi^{>n}(S)$ is free and  consequently $S$ is a generalized Golod ring of level $n$.
\end{proof}

The following theorem  presents a generalization of Lescot's result \cite[Theorem 4.1]{lescot1983serie}.

\begin{theorem}\label{ggfl4}
Let $(S, \mathfrak{p}, k)$ and $(T, \mathfrak{q}, k)$ be local rings, and set 
$R = S \times_k T$. Then $R$ is a generalized Golod ring of level $n$ if and 
only if both $S$ and $T$ are generalized Golod rings of level $n$.
\end{theorem}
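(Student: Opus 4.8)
The plan is to use the explicit construction of the acyclic closure $R\langle X, Y, Z\rangle$ of the fibre product $R = S\times_k T$ provided by Proposition \ref{ggfl3}, together with the characterisation of the generalised Golod property in terms of freeness of the truncated homotopy Lie algebra from Theorem \ref{prl9.5}. Recall that $R$ is generalised Golod of level $n$ if and only if $\pi^{>n}(R)$ is a free Lie algebra, and likewise for $S$ and $T$. So the theorem amounts to showing that $\pi^{>n}(R)$ is free if and only if both $\pi^{>n}(S)$ and $\pi^{>n}(T)$ are free.

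First I would record the structure of the homotopy Lie algebra of a fibre product. It is a classical fact (essentially Dress--Krämer, compatible with Theorem \ref{prl10.6}) that for $R = S\times_k T$ with $\edim R \geq 2$ one has a direct sum decomposition of graded Lie algebras in degrees $\geq 2$, namely $\pi^{\geq 2}(R) \cong \pi^{\geq 2}(S) \amalg \pi^{\geq 2}(T)$, the free product (coproduct) of the two graded Lie algebras; in degree $1$ the dimensions simply add. I would deduce this from Proposition \ref{ggfl3}: the surjections $\phi : R\langle X,Y,Z\rangle \twoheadrightarrow S\langle X\rangle$ and $\psi : R\langle X,Y,Z\rangle \twoheadrightarrow T\langle Y\rangle$ are large homomorphisms (they are retractions of the semifree inclusions $\alpha$, $\beta$, hence induce split injections on $\pi$), so $\pi(S)$ and $\pi(T)$ embed as sub Lie algebras of $\pi(R)$; comparing Hilbert series via Theorem \ref{prl10.6} forces $\pi^{\geq 2}(R)$ to be exactly the free product of these two subalgebras and the degree-$1$ part to split as claimed. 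The cases $\edim R \leq 1$, i.e. $S$ or $T$ regular of embedding dimension $\leq 1$, are trivial since then $R$ is a hypersurface or a field and everything is Golod.

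Next, since the adjunction of a degree-one variable is harmless, truncating above level $n\geq 2$ commutes with the coproduct decomposition, so $\pi^{>n}(R) \cong \pi^{>n}(S) \amalg \pi^{>n}(T)$ as graded Lie algebras. The theorem then reduces to a purely Lie-algebraic statement: a free product $L \amalg M$ of graded Lie algebras (over a field, on positively graded vector spaces, concentrated in degrees $> n \geq 2$) is free if and only if both $L$ and $M$ are free. The ``if'' direction is standard — a free product of free graded Lie algebras is free on the disjoint union of generating sets. For the ``only if'' direction I would invoke the fact, cited in the excerpt just before the statement (\cite[Proposition A.1.10]{lem}), that a graded sub Lie algebra of a free graded Lie algebra on a positively graded vector space is again free: since $L$ and $M$ embed as sub Lie algebras of $L \amalg M$, freeness of $L\amalg M$ forces freeness of $L$ and of $M$. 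Combining with $\pi^{>n}(R) \cong \pi^{>n}(S)\amalg\pi^{>n}(T)$ and Theorem \ref{prl9.5} gives the equivalence.

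The main obstacle is establishing the free-product decomposition $\pi^{\geq 2}(R)\cong\pi^{\geq 2}(S)\amalg\pi^{\geq 2}(T)$ cleanly from Proposition \ref{ggfl3}; one must be careful that $\phi$ and $\psi$ are genuinely large (equivalently, that $\alpha$ and $\beta$ are split injections of DG$\Gamma$ algebras inducing injections on $\Tor(k,k)$) and that the $Z$-variables contribute nothing new to the homotopy Lie algebra beyond what the free product already accounts for — this is where the Hilbert-series bookkeeping of Theorem \ref{prl10.6}, matching $\frac1{P^R_k} = \frac1{P^S_k}+\frac1{P^T_k}-1$ against the Euler characteristic of $U(\pi^{\geq 2}(R))$, pins everything down. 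Once the decomposition is in hand, the Lie-algebra argument is routine given the two cited facts about free graded Lie algebras.
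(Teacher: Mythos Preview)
Your ``only if'' direction is correct and matches the paper: the projections $R\twoheadrightarrow S$ and $R\twoheadrightarrow T$ are large (the paper cites \cite[Theorem 3.4]{moore} directly rather than deriving this from Proposition \ref{ggfl3}), so $\pi(S)$ and $\pi(T)$ embed in $\pi(R)$, and freeness of $\pi^{>n}(R)$ descends to its graded sub Lie algebras $\pi^{>n}(S)$ and $\pi^{>n}(T)$ via \cite[Proposition A.1.10]{lem}.

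The ``if'' direction, however, has a genuine gap. Your key step is the assertion that ``truncating above level $n\geq 2$ commutes with the coproduct decomposition, so $\pi^{>n}(R)\cong \pi^{>n}(S)\amalg\pi^{>n}(T)$''. This is false. Truncation does \emph{not} commute with free products of graded Lie algebras: if $a\in\pi(S)$ and $b\in\pi(T)$ both have degree $\leq n$ but $|a|+|b|>n$, then the bracket $[a,b]$ is a nonzero element of $(\pi(S)\amalg\pi(T))^{>n}$ that lies in neither $\pi^{>n}(S)$ nor $\pi^{>n}(T)$, and is not in the sub Lie algebra they generate. Concretely, with one even generator $a$ of degree $2$ on each side, $\pi^{>2}(S)=\pi^{>2}(T)=0$ while $(\pi(S)\amalg\pi(T))^{>2}$ is infinite-dimensional. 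So even granting the coproduct decomposition $\pi(R)\cong\pi(S)\amalg\pi(T)$, your reduction to ``free product of free is free'' does not go through.

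The paper handles this direction by a different route that avoids any global decomposition of $\pi(R)$. Given that $S$ and $T$ are generalised Golod of level $n$, the distinguished algebras $S\langle X_i:\deg X_i\leq n\rangle$ and $T\langle Y_j:\deg Y_j\leq n\rangle$ are Golod; Lemma \ref{ggfl2} (a Hilbert-series computation) then shows that $U=R\langle X_i,Y_j:\deg\leq n\rangle$ is Golod, hence $\pi(U)$ is free. Proposition \ref{ggfl3} supplies the distinguished algebra $V=R\langle X_i,Y_j,Z_k:\deg\leq n\rangle$ of $R$, and since $R\langle X,Y,Z\rangle$ is a minimal semi-free extension of both $U$ and $V$, the inclusion $U\hookrightarrow V$ induces a surjection on $\Tor(k,k)$ and hence an injection $\pi(V)\hookrightarrow\pi(U)$. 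Now $\pi(V)$ is a sub Lie algebra of a free Lie algebra, so it is free, and $R$ is generalised Golod of level $n$. The point is that the extra $Z$-variables are accounted for not by a free-product formula but by the subalgebra-of-free-is-free principle applied a second time, at the DG level.
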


\begin{proof}
Since $p_1 : R \rightarrow S$ is a large homomorphism \cite[Theorem 3.4]{moore2009cohomology},  the "only if" part follows from Lemma~\ref{largeGGL}.

Now we assume that both $S$ and $T$ are generalized Golod rings of level $n$. 
Let $S\langle X \rangle$ and $T \langle Y \rangle$ be acyclic closures of $S$ and $T$ respectively. Then both $S\langle X_i : \deg(X_i) \leq n \rangle$ and $T\langle Y_i : \deg(Y_i) \leq n \rangle$ are Golod algebras, so by Lemma \ref{ggfl2}, $U = R\langle X_i, Y_j : \deg(X_i), \deg(Y_j) \leq n \rangle$ is a Golod algebra, i.e., $\pi(U)$ is a free Lie algebra. The acyclic closure of $R$ is of the form  $R\langle X, Y, Z  \rangle$ by Proposition \ref{ggfl3}. Set $V = R\langle X_i, Y_j, Z_k : \deg(X_i), \deg(Y_j), \deg(Z_k) \leq n \rangle$.

The inclusion of DG$\Gamma$ algebras $ i : U \hookrightarrow V$  induces the map $i_* : \Tor^U(k, k) \rightarrow \Tor^V(k, k)$. Since $R\langle X, Y, Z  \rangle$ is a semi‑free extension of both $U$ and $V$, which provides a minimal free resolution of $k$, it follows that
\[{\Tor}^U(k, k) = \Ho(k \otimes_U R\langle X, Y, Z  \rangle) = k \otimes_U R\langle X, Y, Z  \rangle\ \text{and} \ {\Tor}^V(k, k) = \Ho(k \otimes_V R\langle X, Y, Z  \rangle) = k \otimes_V R\langle X, Y, Z  \rangle.\]
 Therefore, $i_*$ is surjective, so $i_*$ is also a surjective map from the space of $\Gamma$-indecomposable elements of $\I  \Tor^U(k, k)$ onto that of $\I  \Tor^V(k, k)$. Taking $k$-vector space duals, we observe that $i$ induces an injective Lie algebra homomorphism $i^* : \pi(V) \hookrightarrow \pi(U)$.
Since $\pi(U)$ is a free Lie algebra, the Lie algebra $\pi(V)$ is also free, so $V$ is a Golod algebra. This implies that $R$ is a generalized Golod ring of level $n$.
\end{proof}

\begin{theorem}\label{ggc4}
Let $(R, \mathfrak{m}, k)$ be an Artinian Gorenstein local ring, and let $l \geq 2$. Then $R$ is a generalized Golod ring of level $l$ if and only if $R / \soc (R)$ is a generalized Golod ring of level $l$.
\end{theorem}
\begin{proof}
We use the notation established in the proof of Proposition \ref{ggc3}. Any Artinian  local ring of embedding dimension $1$ is Golod, so we  may assume that $\edim(R) = n  \geq 2$. 

First, we assume that $\bar{R} = R / \soc (R)$ is a generalized Golod ring of level $l$. Then $V^l$ is a Golod algebra. 
We consider the commutative diagram below. 

\[
\begin{tikzpicture}[>=stealth]
  \node (A) at (0,0) {$\Ho(\m V^l)$};
  \node (B) at (3,0) {$\Ho(\m \bar{R}\langle X, Y, Z \rangle)$};
  \node (C) at (3,2) {$\Ho(\m R\langle X \rangle)$};
  \node (D) at (0,2) {$\Ho(\m U^l)$};
  \node (E) at (-2,1) {$\Ho(\m \bar{U}^l)$}; 

  \draw[->] (A) -- (B);
  \draw[->, right hook->] (C) -- (B);
  \draw[->, right hook->] (D) -- (A);
  \draw[->, right hook->] (E) -- (A);

 \draw[->] (D) -- (C); 
  \draw[->, left hook->] (D) -- (E);             
\end{tikzpicture}
\]

The bottom horizontal map is induced by the inclusion $\m V^l \hookrightarrow \m R\langle X, Y, Z \rangle$ and is injective  by Theorem \ref{prl7}. 
It follows from property (2) of Proposition \ref{ggc3} that both downward vertical maps are injective. Therefore,  the top horizontal map is injective, consequently  $U^l$ is a Golod algebra. The ring $R$ is a generalized Golod ring of level $l$ by  Theorem \ref{prl7}. 

Conversely, now we assume that $R$ is a generalized Golod ring of level $l$. This implies that the top horizontal map is injective. To prove  that $\bar{R}$ is a generalized Golod ring of level $l$, it suffices to show that the bottom horizontal map is injective. 

Let $\alpha \in \Zi(\m V^l)$ be a boundary in $\m \bar{R}\langle X, Y, Z \rangle$. Then $\alpha = \partial(\beta)$, where $\beta \in \m \bar{R}\langle X, Y, Z \rangle$.  Note that $\beta \in \m V^N$ for large $N$. We show that $\alpha$ is a boundary in $\m V^l$ by considering two cases. 

\noindent
{\bf Case  1}: $\alpha$ is free of $Y_j$, $Z_k$, i.e., $\alpha \in \m \bar{U}^l$. 

We have seen in the proof of Proposition \ref{ggc3} that the map $\Ho(\m \bar{U}^N) \rightarrow \Ho(\m V^N)$ is injective.  
Since the homology class of $\alpha$ is zero in $\Ho(\m V^N)$, it is also zero in $\Ho(\m \bar{U}^N)$.  Therefore, we may assume $\beta \in \m \bar{U}^N$ satisfying  $\alpha = \partial(\beta)$.  Let  $\tilde{\alpha}$ and $\tilde{\beta}$ be lifts of $\alpha$ and $\beta$ in $\m U^l$, $\m U^N$ respectively. Then $\tilde{\alpha} = \partial(\tilde{\beta}) + w$, $w \in \soc(R) R \langle X \rangle$. This shows that $\tilde{\alpha} \in \Zi(\m U^l)$. We see that $\alpha$ is a boundary in $\m \bar{U}^N$, so $\tilde{\alpha}$ is a boundary in $\m U^N$ by (4) of Lemma \ref{sbr1}. The top horizontal map in the diagram is injective, so $\tilde{\alpha}$ is a boundary in $\m U^l$. This shows that $\alpha$ is a boundary in $\m \bar{U}^l$ and  in particular in $\m V^l$.

\noindent
{\bf Case 2}: $\alpha$ is arbitrary.

For convenience we rename the variables $Y_j$, $Z_k$, $\deg(Y_j), \deg(Z_k) \leq N$ to $W_i, 1 \leq i \leq p_N + q_N$, we have $V^l = \bar{U}^l \langle W_i : 1 \leq i \leq p_l + q_l \rangle$. Now $\alpha$ can be expressed as a linear combination of $\Gamma$-monomials in $W_i$ with coefficients in $\bar{U}^l $.
Let $\init(\alpha) = M$ and the leading coefficient of $\alpha$ be $f \in \m \bar{U}^l$.
Note that by Lemma \ref{ggc2}, $\pm f = \nu_{M}(\alpha) =  \nu_{M}(\partial(\beta)) = \pm \partial(\nu_{M}(\beta))$, $\nu_{M}(\beta) \in \m V^N$, so $f$ is a boundary in $\m \bar{U}^l$ by Case  1, i.e., $f = \partial(g)$ for some $g \in \bar{U}^l$. The initial $\Gamma$-monomial of  $\alpha - \partial(gM)$ is less than $M$. 
If we repeat the above argument for $\alpha - \partial(gM)$ in place of $\alpha$, we have after a finite number of  steps a $\beta' \in \m V^l$ such that $\alpha - \partial(\beta') = \alpha' \in \m \bar{U}^l$. Since $\alpha$ is a boundary in $\m V^N$, so is $\alpha'$.  
By Case 1, $\alpha'$ is a boundary in $\m \bar{U}^l$.  This shows that $\alpha$  is a boundary in $\m V^l$. Therefore, the proof follows. 
\end{proof}

\begin{theorem}\label{ggc5}
	Let $(R, \m_R, k)$ and $(S, \m_S, k)$ be two Artinian Gorenstein local rings. Set $T = R \# S$ and assume $l \geq 2$. Then, $T$ is a generalized Golod ring of level $l$ if and only if both $R$ and $S$ are.
\end{theorem}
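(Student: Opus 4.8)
The plan is to deduce this from the corresponding statement for fibre products, Theorem \ref{ggfl4}, by passing to quotients modulo socle. First I would recall from Fact (1) following Definition \ref{prl10} that $T = R\#S$ is again an Artinian Gorenstein local ring; hence Theorem \ref{ggc4} applies verbatim to each of $R$, $S$ and $T$, giving: $T$ is a generalised Golod ring of level $l$ if and only if $T/\soc(T)$ is, $R$ is so if and only if $R/\soc(R)$ is, and $S$ is so if and only if $S/\soc(S)$ is. (Throughout, $R$ and $S$ are tacitly assumed to have positive embedding dimension, since the connected sum is only defined in that case.)

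The one step requiring a computation is the identification $T/\soc(T)\cong (R/\soc(R))\times_k(S/\soc(S))$. To prove it I would write $\pi:R\times_k S\twoheadrightarrow T$ for the defining surjection and observe that an element $(r,s)\in R\times_k S$ annihilates $\m_R\oplus\m_S$ precisely when $r\in\soc(R)$ and $s\in\soc(S)$, so that $\soc(R\times_k S)=\soc(R)\oplus\soc(S)$, of dimension $2$ over $k$. Since $\langle(\delta_R,-\delta_S)\rangle$ is a one-dimensional subspace of this socle and $\soc(T)$ is one-dimensional ($T$ being Artinian Gorenstein), the inclusion $\pi(\soc(R)\oplus\soc(S))\subseteq\soc(T)$ must be an equality; hence $T/\soc(T)=(R\times_k S)/(\soc(R)\oplus\soc(S))$. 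Finally, the assignment $(r,s)\mapsto(\bar r,\bar s)$ defines a surjection $R\times_k S\twoheadrightarrow(R/\soc(R))\times_k(S/\soc(S))$ whose kernel is exactly $\soc(R)\oplus\soc(S)$ --- the two fibre-product conditions being compatible because $\soc(R)\subseteq\m_R$ and $\soc(S)\subseteq\m_S$ --- which yields the asserted isomorphism. (Alternatively, when $\lo(T)\geq 2$ this is immediate from Fact (2) of Definition \ref{prl10} together with the identity $\soc(A)=\m_A^{\lo(A)}$, valid for any Artinian Gorenstein ring $A$.)

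Combining everything produces the chain of equivalences: $T$ is a generalised Golod ring of level $l$ if and only if $T/\soc(T)\cong(R/\soc(R))\times_k(S/\soc(S))$ is; by Theorem \ref{ggfl4} the latter holds if and only if both $R/\soc(R)$ and $S/\soc(S)$ are generalised Golod of level $l$; and by Theorem \ref{ggc4} this is in turn equivalent to both $R$ and $S$ being so. I do not expect a genuine obstacle here: the entire analytic content is packaged in Theorems \ref{ggc4} and \ref{ggfl4}, so the only point demanding attention is the elementary socle computation above.
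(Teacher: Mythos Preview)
Your proposal is correct and follows exactly the same route as the paper: the paper's proof consists of the single line $\frac{T}{\soc(T)} = \frac{R}{\soc(R)} \times_k \frac{S}{\soc(S)}$ followed by an appeal to Theorems \ref{ggc4} and \ref{ggfl4}. You have simply supplied the elementary verification of that socle identity which the paper leaves to the reader.
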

\begin{proof}
	We have $\frac{T}{\soc(T)} = \frac{R}{\soc(R)} \times_k \frac{S}{\soc(S)}$. Hence the result follows from Theorems  \ref{ggc4}, \ref{ggfl4}. 
\end{proof}

\section{Applications and Further Remarks}\label{sec:ben3}
The structure of a Gorenstein local ring $(R, \m, k)$ is often reflected in the properties of its maximal ideal $\m$. 
In what follows, we present a criterion for the decomposability of Gorenstein local rings as connected sums.

\begin{theorem}\label{ac3}
Let $(R, \m, k)$ be an Artinian Gorenstein local ring with Loewy length $\lo(R) \geq 3$. Then the following are equivalent:

\begin{enumerate}
\item
There exist Artinian Gorenstein local rings $(S, \p)$ and $(T, \q)$ with $\lo(S) = \lo(R)$ and $\lo(T) = 2$ such that $R = S \# T$.

\item
$(0 : \m^2) \not \subseteq \m^2$.
\end{enumerate}
\end{theorem}

\begin{proof}
First, we assume that (1) holds. Since $\lo(S), \lo(T) \geq 2$, we have $\m/ \m^2 = \p/ \p^2 \oplus \q / \q^2$. We choose $y \in \q \setminus \q^2$. Since $\q^3 = 0$, we have $(0, y) \m^2 = 0$. But $(0, y) \in \m \setminus \m^2$. Hence (2) follows.

Conversely, we assume that (2) holds. We consider the vector space $\frac{(0 : \m^2)}{\m^2 \cap (0 : \m^2)}$ over $k$ and choose elements $y_1, \ldots, y_n \in (0 : \m^2)$ such that their images in $\frac{(0 : \m^2)}{\m^2 \cap (0 : \m^2)}$ form a basis. Let $I = (y_1, \ldots, y_n)$. Then it is easily seen that $I + [\m^2 \cap (0 : \m^2)] =  (0 : \m^2)$ and $\m^2  \cap I \subseteq \soc(R)$. 

We shall show that $I\cap (0 : I) = \soc(R)$. Let $z \in I \cap (0 : I)$. Then $z$ annihilates both $I$ and $\m^2$, so it annihilates $I + [\m^2 \cap (0 : \m^2)] = (0 : \m^2)$. This implies that $z \in (0 : (0 : \m^2)) = \m^2$, since $R$ is a Gorenstein ring. It follows that  $z \in \m^2 \cap I$.  Now, because $\m^2 \cap I \subseteq \soc(R)$, we have $z \in \soc(R)$. Therefore, we conclude that  $I \cap (0 : I) \subseteq \soc(R)$. 
If  $I \cap (0 : I) = 0$, then $\ell(I + (0 : I)) = \ell(I) + \ell(0 : I) = \ell(R)$ which is  a contradiction since $I + (0 : I)$ is a proper ideal. Therefore, $I \cap (0 : I) = \soc(R)$. 

Now 
\[\ell(I + (0 : I)) = \ell(I) + \ell(0 : I) - \ell(I \cap (0 : I)) = \ell(R) - 1 = \ell(\m).\]
Moreover, $I + (0 : I) \subseteq \m$. Therefore, $I + (0 : I) = \m$ and we obtain the decomposition 
\begin{equation}\label{deq}
\frac{\m}{ \soc(R)} = \frac{I}{ \soc(R)} \oplus \frac{(0 : I)}{\soc(R)}.
\end{equation}

Observe that  $\m^2 I = 0$, so $\m I \subseteq \soc(R)$. If $\m I = 0$, then $I \subseteq \soc(R) \subseteq \m^2$, which implies that $(0 : \m^2) = I + [\m^2 \cap (0 : \m^2)] \subseteq \m^2$, a contradiction to our hypothesis.  Therefore, $\m I = \soc(R)$. 

If $\m (0: I) = 0$, then $\m^2 = \m[I + (0 : I)] = \m I = \soc(R)$ which is again a contradiction since $\lo(R) \geq 3$. Therefore, $\soc(R) \subseteq \m (0 : I)$. 

Tensoring equation \eqref{deq} with $R/ \m$, we obtain $\frac{\m}{ \m^2} = \frac{I}{ \m I} \oplus \frac{(0 : I)}{\m (0 : I)}$. 
The ideal $I$ is minimally generated by $y_1, \ldots, y_n$.
We choose a minimal generating set $\{x_1, \ldots, x_m\}$ of $(0 : I)$ such that $\{x_1, \ldots, x_m, y_1, \ldots, y_n\}$ is a minimal generating set of $\m$.

Note that $(x_1, \ldots, x_m)(y_1, \ldots, y_n) = 0$. We have $I^3 = 0$ since $\m^2 I = 0$. If $I^2 = 0$, then $ \m I =  [I + (0 : I)] I= 0$, a contradiction. Therefore,  $\max\{ i : I^i \neq 0\} = 2$. We have $\m I = \soc(R) \subseteq \m (0 : I)$, so 
\[\m^2 = \m [I + (0 : I)]  = \m (0 : I) = [I + (0 : I)] (0 : I) =  (0 : I)^2.\] 
By a straightforward inductive argument, $\m^n = (0 : I)^n$ for all $n \geq 2$. 
Since $\{x_1, \ldots, x_m\}$ is a minimal generating set of $(0 : I)$, we have 
\[\max\{ i : (x_1, \ldots, x_m)^i \neq 0\} = \max\{ i : \m^i \neq 0\} = \lo(R).\]
Therefore, the result follows from \cite[Proposition 4.1]{ananthnarayan2019decomposing} and fact (2) of Definition \ref{prl10}.
\end{proof}

Applying the connected sum decomposition criterion from Theorem~\ref{ac3}, we obtain sufficient conditions under which a Gorenstein local ring is generalized Golod of level 2.
\begin{theorem}\label{ggc6}
Let $(R, \m)$ be a Gorenstein local ring. Then $R$ is a generalized Golod ring of level $2$, and therefore good in the following cases.
\begin{enumerate}
\item
$\m^4 = 0$ and $\mu(\m^2) \leq 4$.

\item
The multiplicity of $R$ is at most $12$ and its $h$-vector is different from $(1, 5, 5, 1)$.
\end{enumerate}
\end{theorem}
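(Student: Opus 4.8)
## Proof Plan for Theorem \ref{ggc6}

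The plan is to reduce to the Artinian case and then realise $R$ as an iterated connected sum of Gorenstein Artinian rings of two special types, each already known to be a generalised Golod ring of level $2$: (i) those with $\edim \le 4$ — they have co-depth at most $4$, hence the Backelin--Roos property by \cite[Theorem 6.4]{av4}; and (ii) those with $\m^3 = 0$ — for these $\m^2 \subseteq (0:\m) = \soc(R)$, so $R/\soc(R)$ has square-zero maximal ideal and is therefore Golod, whence $R$ has the Backelin--Roos property by Theorem \ref{sbr2} (and when $\edim R = 1$ the ring is a complete intersection). Theorem \ref{prl9.5} supplies the ``good'' conclusion, and Theorem \ref{ggc5} lets us assemble the pieces, a connected sum being a generalised Golod ring of level $2$ as soon as its two summands are. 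Throughout, the numerical handle for splitting off a connected summand of Loewy length $2$ is Theorem \ref{ac3}, which requires $\lo(R)\ge 3$ and $(0:\m^2)\not\subseteq\m^2$; the latter is guaranteed as soon as $\dim_k(0:_R\m^2)>\dim_k\m^2$, and by Matlis duality $\dim_k(0:_R\m^2)=\ell(R/\m^2)=\ell(R)-\dim_k\m^2$.

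For case (1), $\m^4 = 0$ already forces $R$ Artinian. If $\m^3 = 0$ we are in type (ii); if $\edim R \le 4$ we are in type (i). Otherwise $\lo(R) = 3$ and $n := \edim R \ge 5 > 4 \ge \mu(\m^2)$. Since $\soc(R) = \m^3$ is $1$-dimensional we have $\dim_k\m^2 = \mu(\m^2)+1$, so the displayed identity gives $\dim_k(0:\m^2) = \ell(R)-\dim_k\m^2$ and a short count using $\ell(R) = \edim R + \mu(\m^2) + 2$ shows $\dim_k(0:\m^2) > \dim_k\m^2$; hence $(0:\m^2)\not\subseteq\m^2$. Theorem \ref{ac3} then gives $R = A \# B$ with $\lo(B) = 2$ (type (ii)) and, a further length bookkeeping using $\dim_k(0:\m^2)\cap\m^2=\m^2$, with $\edim A = \dim_k\m^2 - 1 = \mu(\m^2) \le 4$ (type (i)); Theorem \ref{ggc5} concludes.

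For case (2), a harmless faithfully flat local extension makes the residue field infinite, and quotienting by a minimal reduction of $\m$ (an $R$-regular sequence, as $R$ is Cohen--Macaulay) produces an Artinian Gorenstein ring of length $e(R) \le 11$; this reduction preserves the generalised Golod property of level $2$, so we may assume $R$ is Artinian with $\ell(R)\le 11$. We induct on $\ell(R)$. If $\edim R \le 4$ (type (i)) or $\m^3 = 0$ (type (ii)) we are done. Otherwise $\edim R \ge 5$, so $\ell(R) \le 11 \le 2\edim R + 1$, and since $\dim_k\m^2 = \ell(R)-1-\edim R$ this inequality is exactly $\dim_k(0:\m^2) > \dim_k\m^2$, giving $(0:\m^2)\not\subseteq\m^2$; also $\lo(R)\ge 3$ because $\m^3\neq 0$. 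Theorem \ref{ac3} yields $R = A \# B$ with $\lo(B) = 2$ (type (ii)), and from $\ell(R) = \ell(A) + \ell(B) - 2$ with $\ell(B)\ge 3$ (and $\edim B\ge 1$, so the splitting is nontrivial) we get $\ell(A) < \ell(R) \le 11$; the induction hypothesis applies to $A$, and Theorem \ref{ggc5} finishes the proof.

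The crux is the verification that $(0:\m^2)\not\subseteq\m^2$, i.e.\ that $\m$ carries enough generators annihilating $\m^2$ to peel off a Loewy-length-$2$ connected summand; this rests on the Matlis-duality identity $\ell(0:_R\m^2) = \ell(R) - \dim_k\m^2$ together with the bound $\ell(R) \le 2\edim R + 1$ in case (2) (sharp: multiplicity $12$, as in the Gasharov--Peeva example, is precisely what this excludes) and with $\edim R > \mu(\m^2)$ in case (1). The only other point requiring care is that the recursion lands in types (i)--(ii): each application of Theorem \ref{ac3} strictly decreases $\ell$ while retaining $\ell \le 11$ in case (2), and in case (1) a single split already lands both factors in a base type.
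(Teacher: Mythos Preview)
Your proposal is correct and follows essentially the same approach as the paper: reduce to the Artinian case, handle the base cases $\edim R\le 4$ and $\lo(R)\le 2$ directly via \cite[Theorem 6.4]{av4} and Theorem \ref{sbr2}, and otherwise use the length inequality $\ell(0:\m^2)=\ell(R/\m^2)>\ell(\m^2)$ together with Theorem \ref{ac3} to split off a Loewy-length-$2$ connected summand, concluding by Theorem \ref{ggc5}. Your case (1) is in fact worked out a bit more sharply than the paper's: rather than reducing to the indecomposable case and deriving a contradiction, you compute directly that one application of Theorem \ref{ac3} already yields $\edim A=\mu(\m^2)\le 4$ (using $\m^2\subseteq(0:\m^2)$ when $\lo(R)=3$), so no induction is needed there.
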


\begin{proof}
We begin the proof with some general observations. Let $x \in \m \setminus \m^2$ be a nonzero divisor of $R$. Then $\pi(R) \cong \pi(R/(x))$, which implies that $\pi^{> 2}(R) \cong \pi^{> 2}(R/(x))$. It follows that $R$ is a generalized Golod ring of level 2 if and only if so is $R/(x)$. Thus, to prove the result in case $2$, we may assume by a standard reduction argument that $R$ is an Artinian Gorenstein local ring.

Suppose $R$ is an Artinian Gorenstein local ring. If $\lo(R) = 1$ or $\mu(\m) = 1$, then $R$ is a quotient of a discrete valuation ring. Consequently, $R$ is a Golod ring and the proof follows.
If $\lo(R) = 2$ and $\mu(\m) \geq 2$, then $R$ is a stretched Gorenstein ring, i.e., $\mu(\m^2) = 1$. In this case, $R$ is a generalized Golod ring of level $2$ (see \cite[Theorem 5.4, Remark 5.5]{croll2020detecting} or \cite[Lemma 3.8, Theorem I]{gupta2020criterion}). If $\mu(\m) \leq 4$, then also $R$ is a generalized Golod ring of level $2$ \cite[Theorem 6.4]{avramov1988poincare}.
Therefore, to complete the proof in both cases, it suffices to assume further that $\lo(R) \geq 3$ and $\mu(\m) \geq 5$.

We have $\ell(0 : \m^2) = \ell(R/ \m^2) = 1 + \ell(\m/ \m^2) \geq 6$. In case (1), we have
\[\ell(\m^2) = \ell(\m^2/ \m^3) + \ell(\m^3) \leq 5,\]
therefore, $(0 : \m^2) \not\subseteq \m^2$.
	
Next we consider case (2). By the hypothesis $\ell(R) \leq 12$. We claim $(0 : \m^2) \not\subseteq \m^2$. Suppose on the contrary, we have $(0 : \m^2) \subseteq \m^2$. Then
\[\ell(R/ \m^2) = \ell(0 : \m^2) \leq \ell(\m^2) = \ell(R) - \ell(R/ \m^2).\]
It follows that $\ell(R/ \m^2) \leq \frac{1}{2} \ell(R) \leq 6$, but by our assumption 
\[\ell(R/ \m^2) = 1 + \ell(\m/ \m^2) = 1 + \mu(\m) \geq 6.\] Therefore, $\ell(0 : \m^2) = \ell(R/ \m^2) =6$. Now $\ell(\m^2) = \ell(R) - \ell(R/ \m^2) \leq 6 = \ell(0 : \m^2)$ and $(0 : \m^2) \subseteq \m^2$. Therefore, we must have $(0 : \m^2) = \m^2$, consequently $\m^4 = 0$. Since $R$ is a Gorenstein ring,  it follows that  $\mu(\m^3) = \ell(\m^3) = 1$. We also have
\begin{align*}
\ell(\m/ \m^2) &= \ell(R/\m^2) - 1 = 5 \ \text{and} \\
\ell(\m^2/ \m^3) &= \ell(\m^2) - \ell(\m^3) = \ell(0 : \m^2) - \ell(\m^3) = 6 - 1 = 5
\end{align*}
Therefore, the $h$-vector is $(1, 5, 5, 1)$, which is a contradiction. Hence in this case too we have $(0 : \m^2) \not\subseteq \m^2$.

It follows from Theorem \ref{ac3} that in both cases $R = S \# T$ for some Gorenstein local rings $(S, \p)$, $(T, \q)$ such that $\lo(S) = \lo(R) \geq 3$ and $\lo(T) = 2$. We have $\ell(R) = \ell(S) + \ell(T) - 2$. Since $\lo(T) = 2$, we have $\ell(T) \geq 3$ and consequently $\ell(S) < \ell(R)$.

If $\mu(\p) \geq 5$, then $S$ can be further decomposed as a connected sum by repeating the same argument as applied to $R$. Thus, in both cases $R$ can be expressed as the connected sum of finitely many Gorenstein local rings of Loewy length $2$ and an Artinian Gorenstein local ring of embedding dimension at most $4$. Hence the result follows by applying \cite[Theorem 6.4]{avramov1988poincare}, \cite[Theorem 5.4, Remark 5.5]{croll2020detecting} or   \cite[Lemma 3.8, Theorem I]{gupta2020criterion} and Theorem \ref{ggc5}.	
\end{proof}

\begin{example}\label{noce}
We construct an Artinian Gorenstein $\QQ$-algebra $R$ of multiplicity $12$ with $h$-vector $(1,5,5,1)$ that is indecomposable as a connected sum.

Let $F = x_3^2x_4 + x_2x_4^2 + x_1^2x_5 + x_2^2x_5 	\in \QQ[x_1,x_2,x_3,x_4,x_5]$ be a homogeneous polynomial of degree $3$, and set
$R = \QQ[x_1,x_2,x_3,x_4,x_5]/\ann(F),$
where $\ann(F)$ denotes the apolar ideal of $F$ defined via Macaulay’s inverse system. A direct computation in \textsc{Macaulay2} \cite{InverseSystemsSource} shows that
\[ \ann(F) = (x_1x_2,\; x_1x_3,\; x_1x_4,\; x_2^2 - x_1^2,\; x_2x_3,\; x_2x_4 - x_3^2,\; x_2x_5 - x_4^2,\; x_3x_5,\; x_4x_5,\; x_5^2). \]
From this description, one verifies that $R$ is an Artinian Gorenstein ring with Hilbert function $(1,5,5,1)$ and therefore of multiplicity $12$.

Since $F$ is homogeneous of degree $3$ and $\ann(F)$ has no minimal generator of degree $3$, it follows from \cite[Theorem~1.1]{buczynska2015apolarity} that, after any linear change of coordinates, $F$ cannot be written as a sum of two nonzero polynomials in disjoint sets of variables. Consequently, $R$ cannot be expressed as a nontrivial connected sum of Artinian Gorenstein $\QQ$-algebras.

This example shows that the method used in Theorem~\ref{ggc6} does not extend to Artinian Gorenstein local rings of multiplicity $12$ with $h$-vector $(1,5,5,1)$.
\end{example}

In our final theorem, we provide examples of Artinian Gorenstein rings that are not generalized Golod for every multiplicity greater than $18$.

\begin{theorem}\label{nonGGRAG}
	There exists an Artinian Gorenstein ring $S_n$ with $h$-vector $(1,n,n,1)$ which is not generalized Golod for each $n \geq 8$. 
	In particular, for each $m \geq 18$, there exists an Artinian Gorenstein ring of multiplicity $m$ that is not generalized Golod.
\end{theorem}

\begin{proof}
Roos proved that the ring 
\[
R = k[x,y,z,u]/(x^2, y^2, z^2, u^2, xy, uz)
\]
is a Koszul algebra with $h$‑vector $(1,4,4)$. However, it is not a good Koszul algebra and, in particular, not a generalized Golod ring \cite[Theorem~2.4(A)]{MR2158761}. Using the idealization technique \cite[Lemma, p.~231]{gulliksen1970massey}, we construct an Artinian Gorenstein ring $\widetilde{R}: = R \ltimes \E_R(k)$, where $\E_R(k)$ denotes the injective hull of $k$ over $R$. The $h$‑vector of $\widetilde{R}$ is $(1,8,8,1)$.

The natural surjection $\widetilde{R} \to R$ is a large homomorphism \cite[Theorem~1.1]{levin1980large}, \cite[Theorem~1]{herzog1977algebra}. By \cite[Proposition~2.5]{MR2158761}, the homomorphic image of a good ring under a large homomorphism is good. Since $R$ is not good, $\widetilde{R}$ is not good. In particular, $\widetilde{R}$ is not a generalized Golod ring.

The connected sum $S_9:= \widetilde{R} \# \frac{k[t]}{(t^4)}$ is an Artinian Gorenstein ring with $h$-vector \( (1,9,9,1) \). Since $ \widetilde{R}$ is not a generalized Golod ring, the connected sum \( S_9 \) is also not a  generalized Golod ring by Theorem~\ref{ggc5}. Thus, $S_9$ is an example of an Artinian Gorenstein ring with $h$‑vector $(1,9,9,1)$ that is not generalized Golod.
	
	Continuing this process iteratively by taking connected sums with \( k[t]/(t^4) \), we obtain a family of Artinian Gorenstein rings \( S_n \) with $h$-vector \( (1,n,n,1) \) for $n \geq 9$, each of which is not a generalized Golod ring.
\end{proof}

\begin{acknowledgement}
This paper is part of the PhD thesis of the second author. He acknowledges financial support: file number 09/1020(0176)/2019-EMR-I from Council of Scientific and Industrial Research for his PhD. 
 \end{acknowledgement}

\end{document}